\documentclass[a4paper,10pt]{article}
\usepackage{amsthm,amssymb,amsmath,enumerate,graphicx,epsf,bbold}

\newcommand{\COLORON}{1}
\newcommand{\NOTESON}{0}
\newcommand{\Debug}{0}
\usepackage[usenames,dvips]{color} 
\usepackage{amsthm,amssymb,amsmath,enumerate,graphicx,epsf,mathbbol,stmaryrd}
\usepackage[dvips, bookmarks, colorlinks=false, breaklinks=true]{hyperref}


\newcommand{\iicon}{2-connected}
\newcommand{\tcon}{3-connected}

\newcommand{\pr}{consistent}
\newcommand{\prem}{\pr\ embedding}
\newcommand{\vapf}{VAP-free}
\newcommand{\ccfg}{\ensuremath{\cc_f(\G)}}



\hyphenation{com-pac-ti-fi-cation}

\newcommand{\comment}[1]{}
\newcommand{\COMMENT}[1]{}

\definecolor{darkgray}{rgb}{0.3,0.3,0.3}
\newcommand{\defi}[1]{{\color{darkgray}\emph{#1}}}

\newcommand{\acknowledgements}{\section*{Acknowledgements}}



\comment{
	\begin{lemma}\label{}	
\end{lemma}
\begin{proof}

\end{proof}

\begin{theorem}\label{}
\end{theorem} 
\begin{proof} 	

\end{proof}

}



\newtheorem{proposition}{Proposition}[section]
\newtheorem{definition}[proposition]{Definition}
\newtheorem{theorem}[proposition]{Theorem}
\newtheorem{corollary}[proposition]{Corollary}
\newtheorem{lemma}[proposition]{Lemma}

\newtheorem*{noTheorem}{Theorem}
\newtheorem{examp}[proposition]{Example}



\newcommand{\FIG}{0}

\ifnum \NOTESON = 1 \newcommand{\note}[1]{  

	{\color{blue} \hspace*{-60pt} NOTE: \color{Turquoise}{\small  \tt \begin{minipage}[c]{1.1\textwidth}  #1 \end{minipage} \ignorespacesafterend }} 
	
	}
\else \newcommand{\note}[1]{} \fi

\newcommand{\afsubm}[1]{ \ifnum \Debug = 1 {\mymargin{#1}}
\fi} 

\ifnum \Debug = 1 
\else  \fi

\ifnum \FIG = 1 \newcommand{\fig}[1]{Figure ``{#1}''}
\else \newcommand{\fig}[1]{Figure~\ref{#1}} \fi

\ifnum \Debug = 1 \usepackage[notref,notcite]{showkeys}
\fi

\ifnum \COLORON = 0 \renewcommand{\color}[1]{}
\fi

\newcommand{\showFig}[2]{
   \begin{figure}[htbp]
   \centering
   \noindent
   \epsfbox{#1.eps}
   \caption{\small #2}
   \label{#1}
   \end{figure}
}


\newcommand{\R}{\ensuremath{\mathbb R}}
\newcommand{\Z}{\ensuremath{\mathbb Z}}

\newcommand{\ca}{\ensuremath{\mathcal A}}
\newcommand{\cb}{\ensuremath{\mathcal B}}
\newcommand{\cc}{\ensuremath{\mathcal C}}

\newcommand{\cf}{\ensuremath{\mathcal F}}

\newcommand{\cs}{\ensuremath{\mathcal S}}

\newcommand{\oo}{\ensuremath{\omega}}

\newcommand{\Gam}{\ensuremath{\Gamma}}

\newcommand{\sig}{\ensuremath{\sigma}}


\newcommand{\sm}{\backslash}


\newcommand{\pth}[2]{\ensuremath{#1}\text{--}\ensuremath{#2}~path}

\newcommand{\pths}[2]{\ensuremath{#1}\text{--}\ensuremath{#2}~paths}

 

\newcommand{\g}{\ensuremath{G\ }}
\newcommand{\G}{\ensuremath{G}}


\newcommand{\Lr}[1]{Lemma~\ref{#1}}
\newcommand{\Tr}[1]{Theorem~\ref{#1}}
\newcommand{\Sr}[1]{Section~\ref{#1}}

\newcommand{\Cr}[1]{Corollary~\ref{#1}}


\newcommand{\lf}{locally finite}

\newcommand{\Cg}{Cayley graph}

\renewcommand{\iff}{if and only if}
\newcommand{\fe}{for every}
\newcommand{\Fe}{For every}

\newcommand{\st}{such that}

\newcommand{\ti}{there is}
\newcommand{\ta}{there are}

\newcommand{\obda}{without loss of generality}

\newcommand{\wrt}{with respect to}

\newcommand{\ises}{is easy to see}



\newcommand{\labtequ}[2]{ \begin{equation} \label{#1} 	\begin{minipage}[c]{0.9\textwidth}  #2 \end{minipage} \ignorespacesafterend \end{equation} }

\newcommand{\mymargin}[1]{
  \marginpar{%
    \begin{minipage}{\marginparwidth}\small%
      \begin{flushleft}%
        {\color{blue}#1}%
      \end{flushleft}%
   \end{minipage}%
  }%
}%

\newcommand{\mySection}[2]{}













\newcommand{\sfed}{simplified}
\newcommand{\fapr}{facial presentation}
\newcommand{\ccf}{\ensuremath{\cc_f}} 
\newcommand{\ccfh}{\ensuremath{H_1}} 
\renewcommand{\vapf}{accumulation-free}
\newcommand{\Vapf}{Accumulation-free}
\title{Characterising planar \Cg s and Cayley complexes in terms of group presentations}
\author{Agelos Georgakopoulos\thanks{Partly supported by FWF grant P-19115-N18.} \medskip 
\\
  {Mathematics Institute}\\
 {University of Warwick}\\
  {CV4 7AL, UK}\\}

\begin{document}
\maketitle

\newtheorem*{nocorollary}{Corollary}

\begin{abstract}
We prove that a \Cg\ can be embedded in the euclidean plane without accumulation points of vertices \iff\ it is the 1-skeleton of a Cayley complex that can be  embedded in the plane after removing redundant simplices. We also give a characterisation of these \Cg s in term of group presentations, and deduce that they can be effectively enumerated.
\end{abstract}

\section{Introduction}
The study of groups that have \Cg s embeddable in the euclidean plane $\R^2$, called \defi{planar groups}, has a tradition starting in 1896 with Maschke's characterization of the finite ones. Among the infinite planar groups, those that admit a \defi{flat} Cayley complex, defined below, have received a lot of attention. They are important in complex analysis as they include the discontinuous groups of motions of the euclidean and hyperbolic plane. Moreover, they are closely related to surface groups \cite[Section 4.10]{ZVC}. These groups are now well understood due to the work of Macbeath \cite{macCla}, Wilkie \cite{wilNon}, and others; see \cite{ZVC} for a survey\footnote{In \cite{ZVC} the term  {\it Cayley complex} is not used but it is implicit in Theorems 4.5.6 and 6.4.7 that a group admits a flat Cayley complex if and only if it is a \defi{planar discontinuous group}.}. Planar groups that have no flat Cayley complex are harder to analyse, and they are the subject of on-going research  \cite{droInf,DrSeSeCon,dunPla,cayley3,cay2con}. 

All groups, \Cg s and Cayley complexes in this paper are finitely generated. Our first result is

\begin{theorem} \label{thmvapf}
A planar \Cg\ of a group \Gam\ is \vapf\ \iff\ it is the 1-skeleton of a flat Cayley complex of \Gam.
\end{theorem}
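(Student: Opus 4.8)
The plan is to prove the two implications separately, the forward one being the substantial part. For the direction ``1-skeleton of a flat Cayley complex $\Rightarrow$ \vapf'', suppose $G$ is the 1-skeleton of a flat Cayley complex $X$ of $\Gam$. By the definition of flatness there is a set $D$ of 2-cells of $X$ such that $X\sm D$ is simply connected and embeds in $\R^2$ by a map $\phi$, and $G$ is contained in $X\sm D$. Restricting $\phi$ to $G$ already shows $G$ planar, so it only remains to see that this embedding is \vapf. Since $X$ comes from a presentation over the finite generating set $S$ it is locally finite, and a standard local argument then shows that every point of $\R^2$ has a neighbourhood meeting at most one vertex of $\phi(G)$: a point in an open $2$-cell, or in one of the (infinite) faces not filled by a $2$-cell, has a vertex-free neighbourhood, while a point of the $1$-skeleton lies in only finitely many cells of $X\sm D$. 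Hence $\phi(V(G))$ has no accumulation point in $\R^2$, i.e.\ the embedding is \vapf.

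For the converse, let $G=\mathrm{Cay}(\Gam,S)$ be \vapf. The first and main step is to fix a \vapf\ embedding and upgrade it to a \emph{consistent} one, meaning an embedding whose set of face boundaries is invariant under the action of $\Gam$ on $G$. If $G$ is \tcon\ this is automatic, since a \tcong\ embeds essentially uniquely and so $\Gam$ must permute its faces. In general I would reduce to this case: decompose $G$ along its small separators into \tcon\ and trivial pieces, embed the pieces in a $\Gam$-equivariant way -- using canonical objects such as the \mpsc\ to make the choices cohere -- and reassemble them by a \smam, keeping \vapf ness under control throughout.

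Given a consistent \vapf\ embedding $\sig$, let $\cf$ be the set of boundaries of the \emph{finite} faces of $\sig$; these are cycles (closed walks, if $G$ has cut vertices), $\cf$ is $\Gam$-invariant, and -- this is where \vapf ness is used -- $\cf$ is a simple generating set of $\ccfg$. Since $\Gam$ acts with finitely many orbits of edges and each edge meets at most two faces, $\cf$ splits into finitely many $\Gam$-orbits; pick a set $R$ of orbit representatives. Filling in all finite faces of $\sig$ turns $\R^2$ (respectively $S^2$, when $\Gam$ is finite) into a simply connected complex; equivalently, the face boundaries normally generate the kernel of $F(S)\to\Gam$, hence so does $R$, so $\langle S\mid R\rangle$ presents $\Gam$ and its Cayley complex $X$ is well defined with $1$-skeleton $G$. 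Finally $X$ is flat: the finite faces of $\sig$, viewed as $2$-cells, tile the plane, and $X$ becomes exactly this planar, simply connected complex after deleting the $2$-cells that are glued along a face boundary already used -- the duplicates coming from faces with non-trivial stabiliser, together with the ``outer'' $2$-cell when $\Gam$ is finite -- which are precisely the redundant simplices.

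The crux is the second step: producing a \emph{consistent} \vapf\ embedding when $G$ is not \tcon. There one has to balance the freedom of embedding each block against the rigidity imposed by the $\Gam$-action, keep \vapf ness under control while amalgamating infinitely many pieces, and treat the degenerate cases (finite groups, virtually cyclic groups, free products) by hand. Once that is in place, the passage to the Cayley complex and the verification of flatness are largely bookkeeping.
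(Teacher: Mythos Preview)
Your backward implication has a real gap. You claim that if the simplified Cayley complex embeds in $\R^2$ via $\phi$, then the restriction of $\phi$ to the $1$-skeleton is already a \vapf\ embedding. This is false: take any cocompact Fuchsian group, e.g.\ a hyperbolic surface group. Its simplified Cayley complex is a tessellation of the hyperbolic plane, which embeds in $\R^2$ as the open Poincar\'e disc; in that embedding the vertices accumulate everywhere on the boundary circle. Your case analysis misses exactly these points: they lie neither in an open $2$-cell, nor in the image of the $1$-skeleton, nor in an ``unfilled face'' with a vertex-free neighbourhood. The paper deals with this direction differently and cleanly: since the simplified complex is planar, every edge lies in at most two $2$-simplices, so the relator cycles form a $2$-basis of $\ccfg$ by \Lr{relcc}; then Thomassen's \Tr{thom} supplies \emph{some} \vapf\ embedding, not necessarily the one coming from $\phi$.

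For the forward implication you correctly isolate the key point --- that one needs the set of finite face boundaries to be $\Gam$-invariant --- but you then propose to manufacture an invariant embedding by decomposing $G$ into \tcon\ pieces and performing an equivariant amalgamation, a programme you do not carry out and rightly call the ``crux''. The paper bypasses all of this: it shows that \emph{every} \vapf\ embedding of an infinite vertex-transitive graph already has the required invariance. This is \Lr{faces}, whose proof rests on the elementary structural fact (\Lr{sepfin}) that removing two vertices from such a graph never cuts off a finite component. With \Lr{faces} in hand there is no need for block decompositions, \smam s, or any case analysis on connectivity; one simply reads off the relators from the finite faces at the identity and checks that the resulting simplified Cayley complex embeds by filling those faces.
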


Here, a Cayley complex is \defi{flat} if it can be embedded in $\R^2$ after removing redundant 2-simplices; see \Sr{defflat} for the precise definition.
A planar graph is said to be \defi{\vapf}, if it admits an embedding  in $\R^2$ \st\ the images of its vertices have no accumulation point. 
The study of a planar graph is often simplified if one knows that the graph is \vapf; examples range from structural graph-theory \cite{CWY} to percolation theory  \cite{kozPPP} and the study of spectral properties \cite{keller}.  A further example is Thomassen's \Tr{thom} below, which becomes false in the non-\vapf\ case. \vapf\ graphs can be characterized by a condition similar to that of Kuratowski's; see \cite{halVAP}. {\em \Vapf} embeddings also appear with other names in the literature, most notably ``{\em \lf}''.

\Tr{thmvapf} implies that a group has a flat Cayley complex \iff\ it has an \vapf\ \Cg, a fact that might be known to experts, and it should not be too hard to derive it from the results of \cite{ZVC}. 
\Tr{thmvapf} however strengthens this assertion into a theorem about all planar \Cg s, not just their groups. Since a single group can have a large variety of planar \Cg s (see \Sr{sinv} for some examples), it is in principle harder to prove results that hold for all planar \Cg s than proving the corresponding result for their groups. However, our proof is elementary and self-contained, avoiding the geometric machinery
of \cite{ZVC}.

We also prove that every \vapf\ \Cg\ admits an embedding the facial walks of which are preserved by the action of the group; see \Cr{corspin}.

Finally, we derive a further characterisation of the \vapf\ \Cg s, and so by \Tr{thmvapf} also of the groups that admit a flat Cayley complex, by means of group presentations. We introduce a special kind of presentation, called a \defi{\fapr}, which is motivated by geometric intuition and can be easily recognised by an algorithm, and use it to obtain a further characterisation of the class of \vapf\ \Cg s:

\begin{corollary}
A \Cg\ admits an \vapf\ embedding \iff\ it admits a \fapr.
\label{corfapr}
\end{corollary} 

This implies that the \vapf\ \Cg s can be effectively enumerated (\Cr{efen}).

We prove \Tr{thmvapf} in \Sr{smain}. In \Sr{sinv} we examine \vapf ness  as a group-theoretical invariant. Finally, in \Sr{sML} we introduce \fapr s and prove \Cr{corfapr}. 

\section{Preliminaries}

We will follow the terminology of \cite{diestelBook05noEE} for graph-theoretical terms and that of \cite{bogop,hatcher} for group-theoretical ones. 

Let us recall some standard definitions used in this paper. We say that a graph \G\ is \defi{$k$-connected} if $G - X$ is connected for every set $X\subseteq V$ with $|X | < k$. A \defi{component} of \g is a maximal connected subgraph of \G. 

A \defi{walk} in \g is an alternating sequence $v_0e_0v_1e_1 \ldots e_{k-1}v_k$ of vertices and edges in \G\ such that $e_i = \{v_i,v_{i+1}\}$ for all $i < k$. If $v_0 = v_k$, the walk is \defi{closed}. If the vertices in a walk are all distinct, it is called a \defi{path} (many authors use the word `path' to denote a walk in our sense).

A $1$-way infinite path is
called a \defi{ray}, a $2$-way infinite path is
a \defi{double ray}. 
Two rays contained in a graph $G$ are \defi{equivalent} if no finite set of edges
separates them. The corresponding equivalence
classes of rays are the \defi{ends} of $G$.

By an \defi{embedding} of a graph \g we mean a topological embedding of the corresponding 1-complex in the euclidean plane $\R^2$; in simpler words, an embedding is a drawing of the graph in the plane with no two edges crossing. A graph is \defi{planar} if it admits an embedding. A \defi{plane} graph is a (planar) graph endowed with a fixed embedding.

A \defi{face} of an embedding $\sig: G \to \R^2$ is a component of $\R^2 \sm \sig(G)$. The \defi{boundary} of a face $F$ is the set of vertices and edges of \g that are mapped by \sig\ to the closure of $F$. A path, or walk, in \g is called \defi{facial} \wrt\ \sig\ if it is contained in the boundary of some face of \sig.

One of our main tools will be the (finitary) \defi{cycle space} \ccfg\ of a graph $G=(V,E)$, which is defined as the vector space over $\Z_2$ (the field of two elements) consisting of those subsets of $E$ such that can be written as a sum (modulo 2) of a finite set of {circuits}, where a set of edges $D\subseteq E$ is called a \defi{circuit} if it is the edge set of a cycle of \G. 

The cycle space is closely related to the first (simplicial) homology group $\ccfh(G)$ 
\cite{hatcher}, and in fact the two objects coincide when the latter is defined over the field $\Z_2$. In this paper $\ccfh(G)$ will be defined over $\Z$ as usual, and so it should not be confused with \ccfg.

\subsection{(Flat) Cayley complexes} \label{defflat}

The \defi{Cayley complex} $X$ of a group presentation $P= (S,R)$ is  the universal cover of its \defi{presentation complex}, which is a 2-dimensional cell complex with a single vertex,  one loop at the vertex for each generator in $S$, and one 2-cell for each relation in $R$ bounded by the loops corresponding to the generators appearing in $R$, see \cite{hatcher}. The 1-skeleton of this Cayley complex is the Cayley graph corresponding to the group presentation $(S,R)$. From the Cayley complex we derive the  \defi{\sfed} Cayley complex of $P$ as follows. Firstly, \fe\ pair of parallel edges $e,e'$, resulting from an involution in $S$, we identify $e$ with $e'$, gluing them together according to a homeomorphism from $e$ to $e'$ that maps each endvertex to itself. We remove any 2-simplices of $X$ that were bounded by the circle $e \cup e'$; all other 2-simplices incident with $e$ or $e'$ are preserved. 
In the resulting 2-complex $X'$, we define two 
 2-simplices to be \defi{equivalent} if they have the same boundary.  Removing all but one of the elements of each equivalence class from $X'$ we obtain the  \defi{\sfed} Cayley complex of $P$. 
  
Equivalently, we can define the {\sfed\ Cayley complex} of  $P= (S,R)$ by building the corresponding \Cg, identifying each pair of  parallel edges into a single edge, and then for every cycle $C$ of this graph induced by a relator in $R$, introducing a 2-simplex having $C$ as its boundary.

\begin{definition} We say that a Cayley complex is \defi{flat}, if the corresponding \sfed\ Cayley complex is \defi{planar}, that is, the latter admits an embedding into  $\R^2$.
\end{definition} 

For example, the Cayley complex of the  group presentation $\left<a \mid a^n \right>$ has $n$ equivalent 2-simplices, while the corresponding \sfed\ Cayley complex has only one, and is {planar}. As a further example, consider the   presentation $\left<a,b \mid b^2, aba^{-1}b\right>$. The corresponding \sfed\ Cayley complex is a 2-way infinite ladder with each 4-gon bounding a 2-simplex (\fig{ladder}); note that this complex is planar, while the usual Cayley complex is not (to see this, note that for each $b$ edge, \ta\ two distinct 4-cycles in the \Cg\ induced by the relation $aba^{-1}b$ and one 2-cycle induced by $b^2$). These two examples show that the above simplifications of the Cayley complex are necessary to make \Tr{thmvapf} true.

\showFig{ladder}{The \sfed\ Cayley complex of the   presentation $\left<a,b \mid b^2, aba^{-1}b\right>$.}

\section{Proof of \Tr{thmvapf}} \label{smain}

In this section we will be assuming that our graphs have no parallel edges. In a Cayley graph this can be achieved by drawing, for every involution in the generating set, a single undirected edge rather than a pair of parallel edges with opposite directions. This convention affects neither planarity nor \vapf ness, and so our assumption comes without loss of generality for the proof of \Tr{thmvapf}.

Our first lemma, a well-known fact which is easy to prove, relates $\ccfh(G)$ to group presentations. We will say that a closed walk $W$ in \g is \defi{induced} by a relator $R$, if $W$ can be obtained by starting at some vertex $g$ and following the edges corresponding to the letters in $R$ in order; note that for a given $R$ \ta\ several walks in \g induced by $R$, one for each starting vertex $g\in V(G)$. Note that every closed walk in \g uniquely determines an element of $\ccfh(G)$, and we will, with a slight abuse, not make a distinction between the two.

\begin{lemma} \label{relcc}
Let $G= Cay(\Gam,S)$ be a \Cg\ of the group \Gam, and let $ \left< S \mid R\right> $ be a presentation of \Gam. Then the set of walks in \g induced by relators in $R$ generates $\ccfh(G)$. 

Conversely, if $R'$ is a set of relations of \Gam\  with letters in a generating set $S$ such that the set of closed walks of $Cay(\Gam,S)$ induced by $R'$ generates $\ccfh(G)$, then $ \left< S \mid R' \right> $ is a presentation of \Gam.
\end{lemma}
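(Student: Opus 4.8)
The plan is to treat the two directions in the natural way, using the standard correspondence between words in the generators and walks in the Cayley graph.

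For the first direction, I would start from the well-known group-theoretic fact that, since $\left<S\mid R\right>$ presents $\Gam$, the normal closure of $R$ in the free group $F(S)$ equals the set of words that evaluate to the identity in $\Gam$. Translating this into the Cayley graph $G=Cay(\Gam,S)$: a closed walk based at a vertex $g$ corresponds to a word $w$ in $S$ with $w=_{\Gam}1$, and such $w$ is a product of conjugates $u_i r_i^{\pm1} u_i^{-1}$ with $r_i\in R$. I would then observe that, reading this product off as a walk in $G$, the edge-set traversed an odd number of times (i.e.\ the circuit of the walk) is the $\Z_2$-sum of the circuits of the closed walks induced by the $r_i$ based at the endpoints of the prefixes $u_i$ — the conjugating segments $u_i$ are traversed once in each direction and hence contribute nothing mod $2$. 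Since every element of $\cc_f(G)$ is, by definition, a finite sum of circuits of cycles, and every cycle of $G$ is in particular the circuit of a closed walk (based at any of its vertices), it is the sum of circuits of walks induced by relators; this gives that the walks induced by $R$ generate $\cc_f(G)$.

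For the converse, suppose the cycles of $Cay(\Gam,S)$ induced by $R'$ generate $\cc_f(G)$. Let $H=\left<S\mid R'\right>$ and let $\pi\colon H\twoheadrightarrow\Gam$ be the canonical surjection (it is surjective because $S$ generates $\Gam$); I must show $\pi$ is injective, i.e.\ every word $w$ in $S$ with $w=_{\Gam}1$ already equals $1$ in $H$, equivalently lies in the normal closure of $R'$ in $F(S)$. Given such $w$, the corresponding closed walk $W$ in $G$ based at the identity vertex has an edge-set whose circuit lies in $\cc_f(G)$, hence is a finite $\Z_2$-sum of circuits of cycles $C_1,\dots,C_k$ each induced by a relator in $R'$. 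The task is to upgrade this mod-$2$ identity of edge-sets into an actual factorisation of $w$ in $F(S)$ as a product of conjugates of elements of $R'^{\pm1}$. I would do this by an induction that ``cancels'' $W$ down: using the fact that $G$ is simple and connected, pick a cycle $C_j$ sharing an edge with $W$, route a path in $G$ from the basepoint to a vertex of $C_j$, and replace $W$ by the closed walk obtained by inserting (the relator word of) $C_j$ conjugated by that path; this changes the circuit of the current walk by the circuit of $C_j$ and strictly decreases some well-chosen complexity measure (e.g.\ the number of edges used an odd number of times, which eventually reaches $0$, at which point the walk is a product of ``backtracking'' pieces and hence trivial in $F(S)$). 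Keeping track of the conjugating paths shows $w$ lies in the normal closure of $R'$, so $\pi$ is injective and $\left<S\mid R'\right>$ presents $\Gam$.

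The main obstacle is the converse direction: passing from the \emph{mod-$2$} statement ``the circuit of $W$ is a sum of relator-cycles'' to the \emph{exact} statement ``$w$ is a product of conjugates of relators in $F(S)$''. The subtlety is that $\cc_f(G)$ records only the parity of edge-usage, forgetting orientation and multiplicity, whereas the normal closure lives in the non-abelian free group; the reduction-of-complexity induction sketched above is what bridges this gap, and care is needed to ensure the complexity measure genuinely decreases and that the bookkeeping of conjugators is consistent. I expect everything else — the translation between words and walks, and the ``forward'' generation statement — to be routine.
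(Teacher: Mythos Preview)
The paper does not actually prove this lemma; it is introduced as ``a well-known fact which is easy to prove'' and left at that. So there is no paper argument to compare against, and your proposal stands or falls on its own. Your forward direction is correct: the normal-closure description of $\ker(F(S)\to\Gamma)$ translates exactly into the $\Z_2$-sum statement you give, with the conjugating prefixes cancelling mod~$2$.

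The converse, however, has a real gap at the final step. You reduce $W$ by inserting conjugates of relators until the circuit is empty, and then claim that ``the walk is a product of `backtracking' pieces and hence trivial in $F(S)$''. This implication is false. A reduced closed walk can have empty circuit without being null-homotopic: in $G=Cay(\Z^2,\{a,b\})$ the word $(aba^{-1}b^{-1})^{2}$ traces the unit square twice, so every edge is used exactly twice and the circuit is empty, yet the word is non-trivial in $F(a,b)$. More conceptually, ``take the circuit'' is precisely the composite $\pi_1(G)\twoheadrightarrow H_1(G;\Z)\twoheadrightarrow H_1(G;\Z_2)=\mathcal C_f(G)$, and its kernel---the walks with empty circuit---is the large subgroup $[\pi_1(G),\pi_1(G)]\,\pi_1(G)^{2}$, not just the trivial walk. (There is also a minor issue that your suggested complexity, the size of the circuit, need not strictly decrease when you XOR in a relator cycle; this is easily repaired by inducting on the number~$k$ of summands in the $\Z_2$-decomposition instead, but that only sharpens the arrival at the problematic endpoint.)

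You correctly flagged this passage from a mod-$2$ identity to a free-group identity as the main obstacle, but the reduction-to-empty-circuit idea does not bridge it. One way to proceed is to drop the mod-$2$ bookkeeping and work directly in $\pi_1(G,e)$: fix a spanning tree $T$, so that $\pi_1(G,e)$ is \emph{freely} generated by the fundamental cycles $C_f$ for $f\notin T$, and show that each individual embedded cycle lies in the subgroup generated by (based) relator-loops; the generation hypothesis over $\Z_2$ is then used to find a relator cycle sharing a non-tree edge with the current cycle, and one inducts on the number of non-tree edges rather than on the size of the circuit. Alternatively one can phrase the argument via van Kampen diagrams. Either way, the endgame needs a different idea than ``empty circuit $\Rightarrow$ product of backtracks''.
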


Combined with the next easy fact, this allows one to deduce group presentations from \vapf\ embeddings of a \Cg.

\begin{lemma} \label{facecc}
Let $G$ be an \vapf\ plane graph. Then the set of finite facial closed walks of \g generates $\ccfh(G)$. 
\end{lemma}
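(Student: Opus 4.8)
The plan is to show that every circuit $D$ of a VAP-free plane graph $G$ lies in the span of the finite face boundaries, which suffices since the circuits generate $\cc_f(G)$ by definition. A circuit $D$ is the edge set of a cycle $C$, and since $C$ is finite, it is drawn as a closed Jordan curve in the plane; by the Jordan curve theorem it separates $\R^2$ into an unbounded region and a bounded region $\mathrm{int}(C)$. The key point that makes VAP-freeness essential is the following: the set $\cf$ of faces of $G$ contained in $\mathrm{int}(C)$ is \emph{finite}, and each such face is a \emph{finite} face (i.e.\ has a finite boundary). Granting this, I would argue that $D = \sum_{F \in \cf} \partial F$ in $\cc_f(G)$, where $\partial F$ denotes the boundary circuit of $F$: an edge $e$ of $G$ lies in $\mathrm{int}(C) \cup C$ is incident with faces on both sides; if $e \in D = E(C)$ then exactly one of its two incident faces is inside $C$, so $e$ is counted once; if $e \notin D$ but $e$ lies inside $C$, both incident faces are in $\cf$, so $e$ is counted twice and cancels; and if $e$ lies outside $C$, neither incident face is in $\cf$. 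Hence the symmetric difference $\sum_{F\in\cf}\partial F$ is exactly $D$, which is the desired conclusion.

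The two facts I need to nail down are: (a) only finitely many faces lie inside $C$, and (b) each of them has finite boundary. For (a), note that each face inside $C$ contains a point with rational coordinates, and more to the point, in a VAP-free embedding every bounded region of the plane meets only finitely many vertices and hence (since $G$ is locally finite, being a Cayley graph—though for the lemma as stated one argues directly) finitely many edges; a face strictly inside $C$ is bounded, so it is bounded by finitely many edges, and there can be only finitely many such faces because each contains an open disc around one of the finitely many ``cells'' cut out by the finitely many edges inside $C$. For (b), an infinite face boundary would force a vertex accumulation point on that boundary, contradicting VAP-freeness; a face inside $C$ is moreover bounded and so cannot be the unbounded face. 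I would phrase (a) and (b) together: the restriction of the drawing to the closed disc bounded by $C$ involves only finitely many vertices and edges of $G$ (this is where VAP-freeness enters, via the standard fact that a VAP-free plane graph has only finitely many vertices in any bounded set), so $\mathrm{int}(C)$ is subdivided into finitely many faces of $G$, each with finite boundary.

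The main obstacle is making the topological bookkeeping in the displayed edge-count rigorous, in particular handling edges of $C$ and edges incident with $C$ correctly (an edge of $C$ borders exactly one inside-face and one outside-face provided $G$ has no parallel edges and no cut-edges inside $C$—and one should treat a possible bridge lying inside $C$, which borders the \emph{same} face twice and so contributes $0 \bmod 2$, consistently with $e \notin D$). A clean way to avoid case analysis is to work in the plane/sphere and use the standard homological fact that for a finite plane graph the face boundaries of the faces inside a cycle $C$ sum to $E(C)$; then observe that a VAP-free plane graph, restricted to any closed disc bounded by a cycle $C$, \emph{is} a finite plane subgraph whose inside faces coincide with the faces of $G$ inside $C$, so the finite-graph statement applies verbatim. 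I expect the write-up to be short once the reduction ``inside $C$ the graph looks finite'' is invoked.
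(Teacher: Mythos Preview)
Your proof is correct and follows essentially the same approach as the paper's: show that for any cycle $C$, the bounded side contains only finitely many vertices (by VAP-freeness and Bolzano--Weierstrass), hence only finitely many faces, each finite, whose boundaries sum to $E(C)$. The paper's proof is a two-line sketch of exactly this idea, while you spell out the topological bookkeeping in more detail.
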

\begin{proof}
It suffices to show that every cycle $C$ of \g is a sum of  finite facial closed walks  when seen as an element of $\ccfh(G)$. This is indeed the case, for as \g is \vapf\ there must be a side $A$ of $C$ containing only finitely many vertices, and so $E(C)$ is the sum of the facial closed walks corresponding to faces lying in $A$.
\end{proof}

We will also use the following basic characterisation of \vapf\ graphs
\begin{lemma}[{\cite[Lemma~7.1]{thoPla}}]  \label{vapfeq}
A countable graph \g is \vapf\ \iff\ some planar embedding of \g has the property that no cycle has both infinitely many vertices in its interior and infinitely many vertices in its exterior. 
\end{lemma}

The following fact is probably known to experts in the study of infinite vertex transitive graphs. We include a proof sketch for the convenience of the non-expert. A \defi{double-ray} is a 2-way infinite path (with no repetition of vertices).
\begin{lemma} \label{sepfin}
Let \g be an infinite, connected, vertex transitive graph which is not a double-ray. Then for every pair of vertices $x,y$ of \G, no component of $G -\{x,y\}$ is finite.
\end{lemma}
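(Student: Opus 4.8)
The plan is to argue by contradiction, after first disposing of degenerate degrees. If $G$ is not locally finite then it has no finite component after deleting any finite set of vertices (such a component would contain a vertex of finite degree, contradicting vertex-transitivity), so we may assume $G$ is locally finite and hence $d$-regular for some finite $d$. Since $G$ is connected and infinite, $d\ge 1$; a connected $1$-regular graph is $K_2$ and a connected $2$-regular graph is a finite cycle or a \dray, so our hypotheses force $d\ge 3$. Now suppose some $G-\{x,y\}$ has a finite component. Among all pairs $(S,C)$ where $S\subseteq V(G)$ with $1\le |S|\le 2$ and $C$ is a finite component of $G-S$, choose one with $|C|$ minimum; call it $m$.

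First I would pin down the shape of this minimal configuration. If $|S|=1$, say $S=\{v\}$, pick a neighbour $u\in C$ of $v$ (one exists since $G$ is connected); then $C\setminus\{u\}$ is non-empty ($C=\{u\}$ would force $\deg u\le 1$) and every component $D$ of $G[C\setminus\{u\}]$ satisfies $N_G(D)\setminus D\subseteq\{u,v\}$, so $D$ is a finite component of $G-\{u,v\}$ of size $<m$ --- contradiction. Hence $|S|=2$, say $S=\{x,y\}$. The same peeling trick shows both $x$ and $y$ have a neighbour in $C$ (otherwise $C$ is a component of $G$ minus a single vertex, excluded just now via a smaller configuration), and, more importantly, that each of $x,y$ has at least \emph{two} neighbours in $C$: if $N(x)\cap C=\{x_1\}$ then every component of $G[C\setminus\{x_1\}]$ has its outside-neighbours in $\{x_1,y\}$, giving a smaller configuration. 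So $G[C\cup\{x,y\}]$ is a finite graph in which every vertex of $C$ has degree exactly $d$ and both $x,y$ have degree at least $2$, while $C^\ast:=V(G)\setminus(C\cup\{x,y\})$ is infinite and meets only finitely many components of $G-\{x,y\}$, at least one of which is infinite.

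For the final step I would invoke vertex-transitivity to transport this configuration. Since $x$ occurs as a coordinate of a $2$-separator whose removal peels off a size-$m$ component to which it has $\ge 2$ neighbours, so does \emph{every} vertex $w$ of $G$ (this property is automorphism-invariant and $m$ is a global invariant). Apply this to a vertex $v$ lying in $C$ but not adjacent to $\{x,y\}$ --- if no such $v$ exists then $|C|\le 2d$ and one finishes by a direct inspection of the finitely many possibilities for $G[C\cup\{x,y\}]$ --- so that $N(v)\subseteq C$. We obtain an automorphism carrying $(\{x,y\},C)$ to $(\{v,v'\},C_v)$ with $C_v$ a finite component of $G-\{v,v'\}$ of size $m$ and $v$ having $\ge 2$ neighbours in $C_v$; those neighbours lie in $N(v)\subseteq C$, so $C_v$ meets $C$. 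Since the only edges leaving $C$ in $G$ end at $x$ or $y$, any path inside $C_v$ from $C$ to outside $C$ must pass through $x$ or $y$; tracking where $v'$ and the infinite components of $G-\{x,y\}$ are sent then forces $C_v\subseteq C$, and as $v\in C\setminus C_v$ this produces a finite component of size $<m$, the desired contradiction.

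The main obstacle is precisely this last bookkeeping: controlling the image $v'$ of the second separator-vertex and ruling out that $C_v$ ``escapes'' $C$ by running through $x$ or $y$ into an infinite component. A clean way to handle it --- and perhaps the right way to present the lemma's sketch --- is to split according to the number of ends of $G$ (namely $1$, $2$, or $\infty$ for an infinite connected locally finite graph), using the known structure of $2$-ended vertex-transitive graphs in that case and the fact that in the one-ended and infinitely-ended cases the $2$-separator cannot have a finite side; but I would first try to push the elementary peeling-and-transporting argument above through all cases uniformly.
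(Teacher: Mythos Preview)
Your overall strategy --- take a finite component $C$ of $G-\{x,y\}$ of minimum size $m$, show each of $x,y$ has at least two neighbours in $C$, then transport the configuration by an automorphism into $C$ to contradict minimality --- is exactly the paper's. But the two places where you defer details are precisely where the work lies, and as written the proof is incomplete. The fallback ``if every vertex of $C$ is adjacent to $\{x,y\}$ then $|C|\le 2d$ and finish by direct inspection of $G[C\cup\{x,y\}]$'' is not an argument: knowing the isomorphism type of this finite subgraph does not by itself contradict anything, since the rest of $G$ is unconstrained. More seriously, you never establish $C_v\subseteq C$, and you say so yourself. From $C_v\cap C\neq\emptyset$ and $|C_v|=m$ alone it does not follow: $C_v$ may well pass through $x$ or $y$ (neither equals $v$, and you do not control $v'$) and escape $C$. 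Your proposed split by the number of ends does not help either; the claim that in the one-ended or infinitely-ended case a $2$-separator has no finite side is essentially the lemma you are trying to prove.

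The paper supplies the missing leverage via Menger's theorem. From the facts that $G[C]$ has no cut-vertex (a consequence of minimality) and that $x,y$ each have at least two neighbours in $C$, one obtains two internally disjoint $x$--$y$ paths $P,Q$ through $C$; moreover one of $x,y$, say $x$, lies on an infinite subgraph $X$ meeting $P\cup Q$ only at $x$. Now pick \emph{any} $z\in C$ (no non-adjacency assumption needed), map $x$ to $z$ by an automorphism $g$, and set $w:=gy$. A three-case analysis on the location of $w$ --- in a component of $G-\{x,y\}$ other than $C$, inside $C$, or in $\{x,y\}$ --- gives a contradiction each time, because the three ``probes'' $gP,gQ,gX$ meet pairwise only in $\{z,w\}$ yet must all thread through the $2$-cut $\{x,y\}$ to reach their respective destinations. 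This replaces your unfinished ``bookkeeping'' by a clean finite case check.
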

\begin{proof}
To begin with, it is easy to prove that 
\labtequ{x}{\fe\ $x\in V(G)$, no component of $G -\{x\}$ is finite,}
by considering a minimal such component $C$ and mapping $x$ to some vertex of $C$. 

Suppose that some component $C$ of $G -\{x,y\}$ is finite, and choose $x,y$ so as to minimise $|V(C)|$. We claim that the graph $C$ has no cut-vertex. Indeed, if $z\in V(C)$ separates $C$, then $G -\{x,z\}$ contains a component properly contained in $C$, contradicting the minimality of the latter. Moreover, each of $x,y$ has at least two neighbours in $C$; for if $y$ has a single neighbour $y'$ in $C$, then we could have replaced $y$ by  $y'$ to obtain a separator $\{x,y'\}$ cutting off a smaller component, and if $y$ has no neighbour in $C$ then \eqref{x} is contradicted. These two observations, combined with Menger's theorem \cite[Theorem~3.3.1]{diestelBook05noEE}, imply that there are two independent \pths{x}{y}\ $P,Q$ through $C$. Moreover, (at least) one of $x,y$, say $x$, is contained in an infinite subgraph $X$ that does not meet $C \cup \{x,y\}$ except at $x$.


Let $z\in V(C)$, and consider an automorphism $g$ mapping $x$ to $z$. Then, \ti\ a vertex $w=gy$ \st\ $\{z,w\}$ separates \G. We consider three cases.

If $w$ lies in $C':= G \sm (C \cup \{x,y\})$, then each of $gP, gQ, gX$ meets both $C'$ and $C$. But this is impossible since $C$ is separated from $C'$ by $x,y$ and the only vertices meeting more than one of $gP, gQ, gX$ are $z$ and $w$, none of which equals $x$ or $y$.

If $w$ lies in $C$, then some component of $G -\{z,w\}$ is properly contained in $C$ contradicting its minimality.

Finally, if $w=y$, then as the component $gC$ of $G -\{z,w\}$ cannot be smaller than $C$, it must contain a vertex $x'$ in $G \sm (C \cup \{y\})$. Note that there is a \pth{x'}{x}\ $P$ in $G \sm (C \cup \{y\})$, because otherwise $x'$ lies in a component of $G-\{x,y\}$ sending no edges to $x$, contradicting \eqref{x}. Recall that the infinite subgraph $X$ mentioned above does not meet $C \cup \{x,y\}$ except at $x$. Thus the infinite subgraph $X \cup P$ also does not meet $C \cup \{x,y\}$ except at $x$. Since this subgraph meets $gC$ (at the vertex $x'$), it is contained in $gC$ which is separated from the rest of the graph by $\{z,y\}$. This shows  that $gC$ is infinite, contradicting the fact that it is a translate of the finite $C$. 

Thus, in all three cases we obtained a contradiction. This proves \Lr{sepfin}.
\end{proof}

\comment{
	\begin{lemma} \label{sepfinOLD}
Let \g be an infinite, \lf, connected, vertex transitive graph which is not a double-ray. Then for every pair of vertices $x,y$ of \G, no component of $G -\{x,y\}$ is finite.\note{forbid parallel edges}
\end{lemma}
\begin{proof}OLD
Let $R$ be a two-way infinite geodesic in \G; this exists, because  ... . If $G - R$ has an infinite component $C$, then we can attach a ray $S$ in $C$ to some vertex $x$ of $R$, proving that $x$ is the starting point of three independent rays of \G, namely $C$ and two sub-rays of $R$. This means that $x$, and thus every other vertex, cannot be separated from `infinity' by less than three vertices as desired.

So we may assume that every component of $G - R$ is finite. If no such component exists then $G = R$ and we are done. So let $K$ be a finite component of $G - R$. Note that removing a vertex $v\in V(K)$ from \g does not separate $R$, thus \ti\ only one infinite component in $G-v$, and because of transitivity this holds \fe\ $v\in V(G)$. 

Let $m$ be the smallest cardinality of a finite component that can be separated by a pair of vertices $x,y$ of \G. Then $m$ implies the existence of a smallest number $m'\leq m$ \st\ \fe\ $r\in V(R)$ \ti\ a separator $\{x,y\}$ and a finite component $K$ of $G -\{x,y\}$ containing $r$  \st\ $|K \cap R|\leq m'$.
Pick $r\in V(R)$, and $\{x,y\}$ as above.
Then $x,y$ must meet both sub-rays of $R - r$. Now consider the two sub-rays $R_x, R_y$ of $R$ separated from $r$ by $x,y$ respectively. By the above argument, $x$ cannot separate $R_x$ from $R_y$, so \ti\ a \pth{R_x}{R_y} $P$ avoiding $x$. Note that $P$ cannot meet $K$ by the definition of the latter. Let $p,q$ be the endvertices of $P$ in $R_x,R_y$ respectively. 

Now replacing $r$ by $p$ we can also find a pair $\{x',y'\}$ of vertices of $R$ \st\ $p$ lies in a finite component $K'$ of $G -\{x',y'\}$ satisfying $|K' \cap R|\leq m'$. This means that none of $x',y'$ can lie on $R_y$. But then $P$ connects $K'$ to the infinite component of $G -\{x',y'\}$ containing $R_y$, contradicting the fact that $K'$ is finite.
\end{proof}OLD
}


\comment{
	\begin{proof}
Suppose $G -\{x,y\}$ has a finite component $C$, and choose $x,y$ so as to minimize $|V(C)|$. Note that $|C|>1$ since every vertex must have at least 3 neighbours, for otherwise it is easy to check that \g is a double-ray. Let $z\in V(C)$, and consider a vertex $w$ \st\ $\{z,w\}$ separates \G, which exists since \g is vertex-transitive. 

It is easy to check, using \fig{fMader}, that $\{x,y,z,w\}$ has a 2-element subset separating a proper subset of $V(C)$ from the rest of \G\ contradicting the minimality of $|C|$.
\showFig{fMader}{The left figure corresponds to the case when $\{z,w\}$ is disjoint from $\{x,y\}$, while the right one corresponds to the case when $w\in \{x,y\}$. In both figures, the graph is distributed inside the circle in such a way that no edge  joins the upper half to the bottom half or the left part to the right part, and all parts are non-empty (but some of the quarters might be empty).}
	\end{proof}
}

We can now prove \Tr{thmvapf}, which we restate for the convenience of the reader

\begin{noTheorem}
A planar \Cg\ of a group \Gam\ is \vapf\ \iff\ it is the 1-skeleton of a flat Cayley complex of \Gam.
\end{noTheorem}
\begin{proof}
For the forward implication, let $G$ be a planar \Cg\ of the group \Gam, \wrt\ the generating set $\cs$, admitting an \vapf\ embedding \sig. By \Lr{faces} below, if $F$ is a finite face boundary in \sig, then every translate of $F$ is a face boundary. This means that if we let ${\mathcal R}'$ be the set of relations corresponding to the finite facial walks \wrt\ \sig\ incident with the group identity $e$, then every finite facial walk \wrt\ \sig\ is induced by some element of ${\mathcal R}'$, and conversely any cycle induced by some element of ${\mathcal R}'$ bounds a face in \sig. By Lemmas \ref{facecc} and \ref{relcc}, $\left< \cs \mid {\mathcal R'} \right>$ is a presentation of \Gam. The corresponding \sfed\ Cayley complex is planar and \vapf\ since we can embed its 1-skeleton \g\ by \sig\ and then every 2-simplex can be embedded into the face of \sig\ bounded by the corresponding cycle.

For the backward implication, 
let $X$ be a planar \sfed\ Cayley complex and let \g be its 1-skeleton. Let $B$ be the set of closed walks in \g  bounding a 2-simplex of $X$; in fact, each such closed walk is a cycle since $X$ is planar, and  it bounds a face of \G.
Note that $B$ generates  $\ccfh(G)$ by \Lr{relcc} and the definition of a Cayley complex. We will show that the condition in \Lr{vapfeq} is satisfied, i.e.\ no cycle of \g  has both infinitely many vertices in its interior and infinitely many vertices in its exterior. Indeed, every cycle $K$ can be written as a finite sum of elements of $B$ since the latter generates  $\ccfh(G)$. As each element of $B$ bounds a face of \G, it is not hard to see that this sum comprises the face boundaries in the interior of $K$. This implies that the interior of $K$ contains only finitely many vertices. Thus by \Lr{vapfeq}, \g is \vapf.

\end{proof}

A \defi{translate} of a subgraph $F$ of \g is the image of $F$ under an automorphism of \G.
In the following lemma our assumption that \g has no parallel edges becomes essential.
\begin{lemma} \label{faces}
Let \g be a vertex transitive graph with an \vapf\ embedding \sig. If $F$ is a finite face boundary in \sig\ then every translate  of $F$  is a face boundary in \sig.
\end{lemma}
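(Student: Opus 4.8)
The plan is to reduce the statement to a claim about the bridges of $F$ in $G$, and then to use \vapf ness to exclude the only configuration that could make $g(F)$ fail to bound a face. First I would make some reductions. Since $G$ embeds in $\R^2$ it is \lf; and if $G$ is a double ray the assertion is vacuous, so by \Lr{sepfin} and \eqref{x} I may assume that deleting one or two vertices from $G$ never leaves a finite component. Next I would check that every finite face boundary of \sig\ is in fact a cycle $C$ bounding the bounded component of $\R^2\setminus C$, and that this bounded region contains no vertex and no edge of $G$: the unbounded complementary region of a finite cycle always contains infinitely much of the infinite graph $G$, hence is never a face; if a finite face boundary were not a cycle it would repeat a vertex or traverse an edge twice, and in either case a suitable subwalk would enclose a bounded region meeting $G$ in a finite, nonempty subgraph joined to the rest of $G$ through at most two vertices, contradicting \Lr{sepfin} (or \eqref{x}); and anything of $G$ in the bounded region would contradict its being a face. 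In particular $F$ is a finite cycle, and so is $g(F)$ for every automorphism $g$, since automorphisms preserve cycles and their lengths.

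For the main step, recall that a bridge of a cycle $C$ in $G$ is a chord of $C$ or a component of $G-V(C)$ together with its edges to $C$, and that in a plane embedding every bridge lies in a single face of $C$. Since $F$ bounds a face and nothing of $G$ lies in its bounded side, every bridge of $F$ is drawn in the unbounded side of $F$. The heart of the argument is the claim that every bridge of a finite face boundary of \sig\ is infinite; call this $(\ast)$. Granting $(\ast)$, the lemma follows at once: an automorphism $g$ maps the bridges of $F$ bijectively onto those of $g(F)$, so every bridge of $g(F)$ is infinite; each such bridge lies entirely inside or entirely outside the cycle $g(F)$; but the inside of $g(F)$ is a bounded region, and in a \vapf\ embedding a bounded region contains only finitely many vertices, since its closure is compact while the vertex set is closed and discrete in $\R^2$. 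Hence no bridge of $g(F)$ lies inside $g(F)$, so the bounded side of $g(F)$ is free of $G$ and is a face; that is, $g(F)$ is a face boundary of \sig.

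The one real obstacle is $(\ast)$. Suppose a finite face boundary $F$ has a finite bridge $J$, attached to $F$ at a finite set $A\subseteq V(F)$; then the vertex set of $J$ (empty when $J$ is a chord) is a nonempty finite union of components of $G-A$. If $|A|\le 2$ this contradicts \Lr{sepfin}, or \eqref{x} when $J$ is a chord with $|A|=1$ (and $|A|=0$ cannot arise). The hard part will be the case $|A|\ge 3$, where I would run a minimality argument in the spirit of the proof of \Lr{sepfin}: take a counterexample $(F,J)$ with $|E(J)|$ as small as possible, locate inside a bounded region that $J$ cuts off from $F$ a face of \sig\ whose boundary is a shorter face-bounding cycle still carrying a finite bridge, and use vertex-transitivity to map the minimal configuration onto part of itself and derive a contradiction. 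The points that I expect to need the most care are showing that no infinite bridge of $F$ can intrude into those bounded regions — which is once more the compactness/\vapf ness observation of the previous paragraph — and that the auxiliary shorter cycle genuinely has a strictly smaller finite bridge.
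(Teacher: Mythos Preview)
Your reduction to $(\ast)$ is clean, and the deduction of the lemma from $(\ast)$ is correct: if every bridge of $F$ is infinite then so is every bridge of $g(F)$, and by \vapf ness none of these can lie in the bounded side of $g(F)$. The difficulty is entirely in $(\ast)$, and there your argument has two real gaps.

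First, chords. A chord of $F$ is a bridge with two vertices of attachment and \emph{empty} interior; there is no finite component of $G-\{x,y\}$ to point to, so neither \Lr{sepfin} nor \eqref{x} applies, and your remark about ``$|A|=1$'' is not meaningful for a chord in a simple graph. Nothing you have written excludes the possibility that $F$ has a chord drawn outside while the corresponding chord of $g(F)$ is drawn inside---which would already stop $g(F)$ from bounding a face. You need a separate argument to rule this out, and it is not obvious.

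Second, the case $|A|\ge 3$. Your sketch (minimise $|E(J)|$; inside a region that $J$ cuts off, find a shorter face-bounding cycle that still carries a finite bridge; use vertex-transitivity to ``map the minimal configuration onto part of itself'') is not a proof. It is unclear why a face in that bounded region must have a \emph{strictly smaller} finite bridge, and it is unclear what role vertex-transitivity is to play when you are analysing the single cycle $F$ rather than comparing two translates.

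The paper's proof bypasses $(\ast)$. It assumes $F'=g(F)$ fails to bound a face, takes a finite component $C$ on one side of $F'$, and then \emph{uses the fact that $F$ itself bounds a face} to constrain how the remaining components of $G-F'$ attach: since in the $g^{-1}$-image everything must fit on one side of $F$, no other component can attach to two distinct ``intervals'' of $F'-N(C)$, and at most one interval can be adjacent to an infinite component. This forces a $2$-separator cutting off a finite piece, contradicting \Lr{sepfin}. The leverage comes precisely from playing the embedding near $F'$ off against the embedding near $F$; by working with $F$ alone you forfeit that comparison and are left with the harder structural claim $(\ast)$ to prove from scratch.
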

\begin{proof}
Suppose to the contrary that some image $F'=gF$ of $F$ under an automorphism $g$ is not a face boundary. Then, as \sig\ is \vapf, one of the sides of $F'$ contains at least one finite \defi{bridge} $C$ of $F'$, where by a bridge  of $F'$ we mean either a finite component of $G - F'$ or an edge joining two vertices of $F'$. Let $N(C)$ be the set of vertices of $F'$ incident with $C$ (if $C$ is an edge, then $N(C)$ are its endvertices). Then $F' - N(C)$ consists of a set of disjoint paths, which we call the \defi{intervals}. Note that unless $C$ is an edge, we have $|N(C)|\geq 3$ for otherwise \Lr{sepfin} is contradicted as $N(C)$ separates $C$.

We claim that
\labtequ{interv}{no bridge of $F'$ is adjacent with more than one interval.}
Indeed, if such a bridge $C'$ existed, then, by a topological argument, it would be impossible to embed \g\ is such a way that both $C$ and $C'$ lie in the same side of $F'$ (\fig{fIJ}), but such an embedding must be possible since $F$ is a face boundary. 

Next, we claim that at most one of the intervals sends an edge to an infinite component of $G - F'$. For if \ta\ intervals $I\neq J$ adjacent with  infinite components $C_I$, $C_J$ of $G - F'$, then replacing $I$ in $F'$ by a path through $C$ we would obtain a cycle $D$ that separates $C_I$ from $C_J$ by \eqref{interv} (\fig{fIJ}). But then  $g^{-1}I,g^{-1}J$ must lie in distinct sides of $g^{-1}D$ since $F=g^{-1}F'$ and $F$ is a face boundary, contradicting the fact that \sig\ is \vapf. 

\epsfxsize=0.6\hsize
\showFig{fIJ}{A contradictory situation in the proof of \Lr{faces}.}

Thus our claim is proved, implying that there is a unique interval $I$ adjacent with the infinite component of $G - F'$. This fact, combined with \eqref{interv}, implies that deleting the vertices $x,y \in N(C)$ bounding $I$ leaves a finite component, namely the component $K$ of $G -\{x,y\}$ containing $F'-I$; note that here we are using the fact that \g has no parallel edges to make sure that $K$ contains at least one vertex. But this contradicts \Lr{sepfin}.
\end{proof}

\Lr{faces} implies that every \vapf\ \Cg\ admits an embedding that is topologically identical  around any vertex. In order to make this more precise we will need a few definitions.

Given an embedding \sig\ of a \Cg\ $G$ with generating set $S$, we consider \fe\ vertex $x$ of \g the embedding of the edges incident with $x$, and define the \defi{spin} of $x$ to be the cyclic order of the set $E_x$ of edges incident with $x$ in which $e$ is a successor of $f$ whenever the edge $e$ comes immediately after the edge $f$ as we move clockwise around $x$. Note that the set $E_x$  depends only on $S$ and  our convention on whether to draw one or two edges for involutions. This allows us to compare spins of different vertices, by identifying edges corresponding to the same generator in $S \cup S^{-1}$. 

Call an edge of \g \defi{spin-preserving} if its two endvertices have the same spin in \sig, and call it \defi{spin-reversing} if the spin of one endvertex can be obtained from the spin of the other by reversing the order. Call a colour in $S$ \defi{\pr} if all edges bearing that colour are  spin-preserving or all edges bearing that colour are spin-reversing in \sig. Finally, call the embedding \sig\ \defi{\pr} if every two vertices have the same spin up to reversing the order, and every colour is \pr\ in \sig. 

It is straightforward to check that \sig\ is \pr\ \iff\ the action on the \Cg\ \g by its group preserves facial walks.

It is known that planar 3-connected \Cg s have a \prem\ \cite{cayley3}, while \Cg s of connectivity 2 do not always admit a \prem\ \cite{DrSeSeCon}. Our next result shows that the latter cannot occur in the \vapf\ case.

\begin{corollary}\label{corspin}
Every \vapf\ planar \Cg\ admits a \prem.
\end{corollary} 

Again, our convention that involutions are represented by a single edge rather than a pair of parallel edges is necessary here. For example, the \Cg\ of $<a,b,c| a^2, b^2, c^2, (ab)^3 , (bc)^3 , (ca)^3 >$ is a hexagonal grid that does not admit a \fapr\ if  its generators are represented by pairs of parallel edges.
\begin{proof}
Let \g be a \Cg\ with an \vapf\ embedding \sig, and let $S$ be its set of generators. We define an equivalence relation $\sim$ on $S \cup S^{-1}$ as follows. Declare two elements to be \defi{neighbours}, if there is a finite face boundary incident with a fixed vertex $o\in V(G)$ containing the two edges corresponding to these elements, and let $\sim$ be the transitive closure of the neighbour relation. By \Lr{faces}, if two edges incident with some other vertex $x$ lie  in a common finite face, then the  corresponding  edges incident with $o$ are also adjacent in the cyclic ordering. Thus neither the neighbour relation nor $\sim$ can depend on the choice of $o$.

Note that, by the definitions, equivalence classes of $\sim$ give rise to consecutive members of the spin of $o$, or any other vertex. This means that the spin of any vertex $x$ can be obtained from that of $o$ by changing the order in which the various $\sim$-classes appear or reversing the order in which the elements of a class appear. 

Our next claim that the edges incident with any vertex $x$ corresponding to each $\sim$-class lie in distinct components of $G - x$; in particular, $G$ is not 2-connected unless  $\sim$ only has one equivalence class. Indeed, between any two $\sim$-classes in the spin of $x$ there must be an infinite face by the definition of the neighbour relation. Now if there is a path $P$ in $G - x$ connecting the other endvertices of two edges $e,f$ incident with $x$ from distinct $\sim$-classes, then attaching $e$ and $f$ to $P$ we would obtain a cycle through $x$ that would separate two such infinite faces, contradicting \vapf ness.

Our last two observations combined show that we can modify our embedding of \G\ into an embedding in which $x$ and $o$ have the same spin up to reflection (i.e.\ reversing the order) by topological operations like reflecting the embedding of a single component of $G - x$ or changing the order in which two such components are embedded around $x$ (in the case where there are more than two of them). Note that such operations can only reverse the order of the spin of $o$ (if $o$ happens to lie in one of the components reflected), but they do not change adjacencies. Thus for every finite vertex set, we can make sure that all vertices in the set have the same spin up to reflection by finitely many such operations. By a standard compactness argument, we obtain an \vapf\ embedding of \g in which all vertices have the same spin as $o$ or its reflection. 

It remains to show that each element of $S$ can be forced to be \pr\ as defined above. For this we distinguish two cases given an $s\in S$. The first case is when each edge of colour $s$ is a bridge, i.e.\ its removal separates \G. In this case we can perform reflecting operations as above to make all such edges spin-preserving. In the other case, an argument similar to the one above shows that for each such edge $e$, at most one of the faces incident with $e$ is infinite in any \vapf\ embedding of \G. Thus $e$ lies in a finite face boundary $C$, and by \Lr{faces} all translates of $C$ are face boundaries as well. It is now easy to see that all translates of $e$ are spin-preserving or they are all spin-reversing, as $C$ forces one of the two behaviours.
\end{proof}

In \Sr{sML} we will prove a result that is, in a sense, the converse of \Cr{corspin}; we will show how to use the ideas of spin and consistency to deduce \vapf ness from properties of a presentation.

\section{\Vapf ness as a group-theoretical invariant} \label{sinv}

A planar group can admit both \vapf\ and non-\vapf\ \Cg s. For example, the \Cg\ corresponding to the presentation $\left<a,b\mid b^2, abab \right>$, of the infinite dihedral group, is \vapf\ planar, but adding the redundant generator $c=ab$ keeps the \Cg\ planar and makes it non-\vapf\ as the reader can check. Thus \vapf ness is not group-theoretical invariant in general. However, it becomes an invariant if one only considers \tcon\ \Cg s:

\begin{theorem} \label{thmtc}
If a group \Gam\ has a \tcon\ \vapf\ planar \Cg\ and a group $\Delta$ has a \tcon\ non-\vapf\ planar \Cg, then \Gam\ is not isomorphic to $\Delta$.
\end{theorem}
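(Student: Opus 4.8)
The plan is to reduce everything to the number of ends of the group. Concretely I would prove two facts: (a) if an infinite group $\Gam$ has a \tcon\ \vapf\ planar \Cg\ then $\Gam$ is one-ended; and (b) if $\Gam$ is one-ended then every \tcon\ planar \Cg\ of $\Gam$ is \vapf. Granting these, suppose $\Gam\isom\Delta$ with $\Gam$ as in the theorem and $\Delta$ having a \tcon\ non-\vapf\ planar \Cg; then $\Delta$, hence $\Gam$, is infinite (a non-\vapf\ graph is infinite), so by (a) $\Gam$ is one-ended, and by (b) every \tcon\ planar \Cg\ of $\Delta\isom\Gam$ is \vapf\ --- contradicting the choice of $\Delta$'s \Cg. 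So $\Gam\not\isom\Delta$.

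The main tool for both (a) and (b) is the uniqueness, up to homeomorphism of the sphere, of the embedding of a \tcon\ planar graph in $S^2$ (Whitney's theorem, which holds also for infinite locally finite graphs). For a \tcon\ planar \Cg\ $G=Cay(\Gam,S)$ this gives, first, that every automorphism of $G$ extends to a self-homeomorphism of $S^2$, so that the left-translation action of $\Gam$ on $G$ extends to an action on $S^2$ by homeomorphisms; and, second, that the set $\Lambda=\Lambda(G)$ of accumulation points of $V(G)$ in $S^2$ --- closed, $\Gam$-invariant, nonempty since $G$ is infinite --- together with the component $U=U(G)$ of $S^2\sm\Lambda$ containing the connected set $G$, are invariants of $G$ up to homeomorphism of $S^2$, and are $\Gam$-invariant. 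I would also isolate the auxiliary equivalence: \emph{$G$ is \vapf\ iff $|\Lambda|=1$ iff $U$ is simply connected}. Indeed, if $\Lambda=\{p\}$ then $S^2\sm\{p\}\isom\R^2$ carries $G$ with no accumulation points of vertices; conversely, a \vapf\ plane drawing of $G$, adjusted so that edges do not accumulate either, one-point-compactifies to a sphere embedding with a single accumulation point, and uniqueness transfers $|\Lambda|=1$ to the canonical embedding. Finally, $U$ simply connected means $U\isom\R^2$ (a proper open simply connected subset of $S^2$), and then $G\subseteq U$ has all its accumulation points in $\partial U\subseteq\Lambda\subseteq S^2\sm U$, hence is \vapf; while connectedness of $\Lambda$ forces $S^2\sm U$ --- the union of $\Lambda$ with the other components of $S^2\sm\Lambda$, each of which meets $\Lambda$ --- to be connected, i.e. $U$ simply connected, and conversely.

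For (a): let $G_1$ be a \tcon\ \vapf\ planar \Cg\ of $\Gam$, so $\Lambda(G_1)$ is a single point fixed by $\Gam$ and $\Gam$ acts on $\R^2=S^2\sm\Lambda(G_1)$ with $G_1$ embedded. The decisive step is to show that every face of $G_1$ in $\R^2$ is bounded, equivalently that this action is cocompact: an unbounded face would be bounded by a $\Gam$-cocompact double ray, forcing $G_1$ to be a double ray (contradicting \tcon ness), and \Lr{sepfin} is what excludes the intermediate possibilities. (Alternatively, one can run this step through \Tr{thmvapf}, showing that the \sfed\ Cayley complex of the resulting flat presentation realises all of $\R^2$.) A properly discontinuous cocompact action of $\Gam$ on $\R^2$ by homeomorphisms exhibits $\Gam$, via the classification of compact $2$-orbifolds, as a virtual fundamental group of a closed surface admitting $\R^2$ as universal cover; all such groups are one-ended, as they contain a closed surface group of finite index.

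For (b): let $\Gam$ be one-ended and $G_2$ any \tcon\ planar \Cg\ of $\Gam$, with its canonical sphere embedding, $\Gam$-action, limit set $\Lambda(G_2)$, and component $U_2\ni G_2$. Assigning to an end $\oo$ of $G_2$ the component of $\Lambda(G_2)$ containing all accumulation points in $S^2$ of rays that represent $\oo$ defines a $\Gam$-equivariant surjection of the end space of $G_2$ onto the set of connected components of $\Lambda(G_2)$; since the end space of $G_2$ coincides with that of $\Gam$, it is a single point, so $\Lambda(G_2)$ is connected, whence $G_2$ is \vapf\ by the auxiliary equivalence. Assembling (a) and (b) proves \Tr{thmtc}. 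I expect the two genuinely delicate points to be the cocompactness claim in (a) --- reconciling 3-connectedness with the topology of the embedding near its limit set --- and the well-definedness and surjectivity of the end-to-component map in (b); the rest is bookkeeping around Whitney's uniqueness theorem and \Lr{sepfin}.
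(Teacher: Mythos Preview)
Your high-level reduction is exactly the paper's: the paper also argues via the number of ends, using (a) a \tcon\ \vapf\ planar vertex-transitive graph has at most one end (this is \Lr{tc1end}), and (b) a one-ended group has only \vapf\ planar \Cg s (stated as ``well-known, and not hard to prove''). So at the strategic level you and the paper agree.

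Where you diverge is in the proof of (a). The paper's argument is entirely elementary: from \Lr{LG3} one gets an infinite face boundary, Whitney--Imrich (\Tr{imrcb}) then puts an infinite face boundary through every vertex, and a short Menger argument produces a cycle with infinitely many vertices on both sides, contradicting \vapf ness. Your route instead aims to show the $\Gam$-action on $\R^2$ is cocompact and then invoke the classification of compact $2$-orbifolds to conclude one-endedness. That is a legitimate but much heavier path, and your cocompactness step is where the real gap lies: the sentence ``an unbounded face would be bounded by a $\Gam$-cocompact double ray, forcing $G_1$ to be a double ray'' does not hold up as written. The group $\Gam$ permutes faces (by Whitney) but need not fix any single infinite face, so there is no reason its boundary should carry a cocompact $\Gam$-action; and even granting that every vertex lies on \emph{some} infinite face boundary, nothing forces $G_1$ itself to be a double ray. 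What you actually need here is precisely the paper's Menger argument (or something equivalent), not \Lr{sepfin}. Once you have ``all faces finite'' you could indeed continue via orbifolds, but at that point the elementary route has already delivered (a) without any classification theorem.

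For (b) the paper says nothing beyond ``well-known''; your limit-set sketch is a reasonable way to flesh it out. The point you flag --- surjectivity of the end-to-component map --- is genuine, and note also that your auxiliary equivalence ``$|\Lambda|=1$ iff $U$ simply connected'' is not literally true (a connected $\Lambda$ with more than one point still gives simply connected $U$); what you actually use, and what is correct, is ``$\Lambda$ connected $\Rightarrow$ $U$ simply connected $\Rightarrow$ \vapf''. With that correction your (b) is fine in outline.
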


Before proving this let us see a further example showing that it is necessary that both graphs in the assertion be \tcon. Consider the \Cg\ corresponding to the presentation $\left<a,b\mid a^4, b^4\right>$. This is a free product of 4-cycles, and it \ises\ that it has an \vapf\ embedding and that its connectivity is 1. Now add the redundant generators $c=ab$ and $d=a^2 b^2 a^2$. Note that $d^2=1$. It is not hard to check that the corresponding \Cg\ is \tcon, and that it is still planar: \fe\ 4-cycle $C$ spanned by $a$, embed the four 4-cycles spanned by $b$ incident with $C$ alternatingly inside and outside $C$. Such an embedding is not \vapf, for $C$ separates two infinite subgraphs. It now follows easily from the following classical result, proved by Whitney \cite[Theorem 11]{whitney_congruent_1932} for finite graphs and by Imrich \cite{ImWhi} for infinite ones, that no embedding of this graph is \vapf.


\begin{theorem} \label{imrcb}
Let \g be a \tcon\ graph embedded in the plane. Then every automorphism of \g maps each facial path to a facial path.
\end{theorem}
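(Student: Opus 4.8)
The plan is to derive the statement from the uniqueness of plane embeddings of $3$-connected planar graphs, which is exactly the content of the Whitney--Imrich theorem being invoked; indeed Theorem~\ref{imrcb} is essentially a reformulation of that uniqueness. The form I want to use is: \emph{the rotation system of a $3$-connected planar graph $G$---the cyclic order in which the edges at each vertex appear in a given plane embedding---is determined by $G$ up to reversing, simultaneously, the cyclic orders at all vertices}; equivalently, the collection $\{\sigma^{-1}(\partial F) : F \text{ a face of } \sigma\}$ of face boundaries is independent of the plane embedding $\sigma$ of $G$. For finite $G$ this is Whitney's theorem, and for infinite $G$ it is Imrich's extension of it.

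Granting this, the proof is short. Fix a plane embedding $\sigma$ of $G$ and an automorphism $\alpha$ of $G$, and let $\tilde\alpha$ be the homeomorphism of the topological realisation of $G$ induced by $\alpha$. Then $\sigma' := \sigma \circ \tilde\alpha^{-1}$ is again a plane embedding of $G$; it has the same image as $\sigma$, hence literally the same faces. Moreover a path $P$ lies on the boundary of a face $F$ with respect to $\sigma$ if and only if $\alpha(P)$ lies on the boundary of $F$ with respect to $\sigma'$, since $\sigma'(\alpha(P)) = \sigma(P)$. Thus $\alpha$ maps the facial paths of $\sigma$ bijectively onto the facial paths of $\sigma'$. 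By the Whitney--Imrich theorem these two collections of paths coincide, so $\alpha$ maps the facial paths of $\sigma$ onto themselves, which is the assertion.

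There is no genuinely hard step here: the whole content sits in the cited theorem, and the transport-of-embedding argument above is valid verbatim in the infinite and possibly non-\vapf\ setting. What does deserve a remark---and is the main thing to be careful about---is that this robustness is not quite free. First, one should check that the rotation system is meaningful for the given (merely topological) embedding; for locally finite $G$---the only case the paper needs---this is routine, as the finitely many edges at a vertex $v$ are arcs meeting only at $\sigma(v)$ and therefore leave $\sigma(v)$ in a well-defined cyclic order. Second, for infinite $G$ a face need not be bounded by a finite cycle, so a face boundary may carry one-way or two-way infinite walks as well as finite closed ones; this is harmless because a facial \emph{path} is finite, and $\alpha$ carries it into the $\alpha$-image of whichever face boundary contains it, whatever its shape. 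So the only thing one must take on trust is Imrich's theorem itself, everything else being elementary.
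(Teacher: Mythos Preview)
The paper does not prove Theorem~\ref{imrcb} at all: it is stated as a classical result and attributed to Whitney (finite case) and Imrich (infinite case), with no argument given. Your proposal correctly recognises this, reducing the facial-path formulation to the standard ``uniqueness of embedding / rotation system'' form of the Whitney--Imrich theorem via the transport-of-embedding trick $\sigma' = \sigma \circ \tilde\alpha^{-1}$; this reduction is sound and is exactly why the paper feels entitled to quote the result without proof. So your treatment is consistent with the paper's, and there is nothing further to compare.
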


We will need a few lemmas for the proof of \Tr{thmtc}.

\begin{lemma} \label{LG3m}
 Let \g be a \iicon\ planar graph and let $\omega, \psi$ be distinct ends of \G. Then there is a cycle $C$ in \g that \defi{separates} $\omega$ from $\psi$, i.e.\ every double-ray with a tail in $\omega$ and a tail $\psi$ has a vertex in $C$.
\end{lemma}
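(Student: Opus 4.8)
The plan is to enlarge a finite separator of $\omega$ and $\psi$ to the vertex set of a cycle. The key observation is that a cycle $C$ separates $\omega$ from $\psi$ in the stated sense if and only if $V(C)$ separates them: removing $V(C)$ puts $\omega$ in a different component from $\psi$, so any double ray with a tail in each must meet $V(C)$. Hence it suffices to produce a finite separator of $\{\omega,\psi\}$ that induces a subgraph containing a spanning cycle.

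First I would use $\omega\neq\psi$ to fix a finite $S\subseteq V(G)$ with $\omega$ and $\psi$ in distinct components of $G-S$, chosen inclusion-minimal, and let $A$, $B$ be the components containing the rays of $\omega$ and of $\psi$ respectively (unique, as $S$ is finite). Minimality forces every vertex of $S$ to have a neighbour in $A$ and one in $B$, and $2$-connectedness forces every component of $G-S$ other than $A$ to have at least two neighbours in $S$. If $G$ has a cycle $C$ through all of $S$ we are immediately done, since $V(C)\supseteq S$ keeps the component of $\omega$ inside $A$; in particular this settles the case $|S|=2$, where such a cycle exists by $2$-connectedness.

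For larger $S$ I would pass to a finite minor. Contract $B$, and each component of $G-S$ other than $A$, to single vertices, and contract $A$ to a single vertex $a$ after first replacing it by a finite connected subgraph retaining all edges from $A$ to $S$ together with an initial segment of one ray of $\omega$; this produces a finite planar graph $\widehat G$, still $2$-connected, with a vertex $a$ recording the side of $\omega$ and a vertex $b$ recording the side of $\psi$. In a finite $2$-connected plane graph every face is bounded by a cycle, and by inspecting the faces incident with $b$ — equivalently, the block structure of $\widehat G-\{a,b\}$ — one should obtain a cycle of $\widehat G$ separating $a$ from $b$ and running through neither; re-expanding the contracted pieces to paths then turns it into a cycle $C$ of $G$ with $V(C)$ separating $\omega$ from $\psi$.

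The step I expect to cause trouble is precisely this passage to $\widehat G$ and the extraction of the cycle. Since $G$ is only assumed planar, not \vapf, one cannot read the curve off an embedding of $G$ itself, and the trimming of $A$ must be arranged so that the cycle produced in $\widehat G$ avoids $a$ (and, symmetrically, $b$): if it ran through $a$, then, as examples in which a vertex of $S$ dominates the end $\omega$ show, the pulled-back curve need not separate $\omega$ from $\psi$, and one also has to avoid the degenerate configurations (such as a triangle) in which no cycle of a finite $2$-connected plane graph separates two prescribed vertices while missing both. Getting this bookkeeping right — and handling the possibility of infinitely many small components of $G-S$, which must be pruned to finitely many ``types'' of attachment to $S$ without altering connectivity across $S$ — is where the real work lies; everything else is routine.
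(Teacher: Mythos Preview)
Your minor-based route is quite different from the paper's, and the step you flag is the crux, not bookkeeping. Let $G$ consist of three disjoint double rays $R_i=(\ldots,r_i^{-1},r_i^0,r_i^1,\ldots)$, $i=1,2,3$, together with rungs $r_1^nr_2^n$ and $r_2^nr_3^n$ for every $n\ne 0$. This is a $2$-connected planar graph with two ends, and $S=\{r_1^0,r_2^0,r_3^0\}$ is a \emph{minimum} (hence certainly inclusion-minimal) separator which is independent, with $G-S$ having only the two components $A,B$. Your $\widehat G$ is then exactly $K_{2,3}$: every cycle has length four and contains both $a$ and $b$, so no cycle of $\widehat G$ separates $a$ from $b$ while avoiding them, regardless of how $A$ is truncated before contraction. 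A cycle of $G$ that does the job, for instance the hexagon $r_1^1r_2^1r_3^1r_3^2r_2^2r_1^2$, lives entirely inside $A$ and is collapsed to the single vertex $a$; the contraction destroys precisely the information you need. Recovering it is not a matter of pruning finite components --- here there are none --- nor of choosing the truncation more carefully, but of abandoning the contraction, which is to say abandoning the plan.

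The paper works directly in a fixed planar embedding $\sigma$ of $G$ and never passes to a minor. It begins with any cycle $C_1$ through two points of $S$ (available by $2$-connectedness) and observes that if $C_1$ does not separate the ends then both lie on one side of $C_1$, and so must some $s_j\in S$, since otherwise every $\omega$--$\psi$ double ray would already have to cross $C_1$ in order to reach $S$. One attaches $s_j$ to $C_1$ by two independent paths $P_1,P_2$ and takes for $C_2$ the boundary of the region of $\R^2\setminus(C_1\cup P_1\cup P_2)$ containing $\omega$; no $s_i$ that was already off the $\omega$-side returns to it. After at most $|S|$ such steps one reaches a cycle with $\omega$ on one side and all of $S$ off that side, which therefore separates the ends. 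The embedding and the Jordan curve theorem do the work you hoped the minor would do, and termination is automatic.
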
 
\begin{proof}
Fix an embedding \sig\ of \G. Consider a finite set of vertices $S = \{s_1, s_2, \ldots, s_k\}$ separating $\omega$ from $\psi$, and let $C_1$ be a cycle containing $s_1, s_2$; such a cycle exists since \g is \iicon. If $C_1$ does not separate $\omega$ from $\psi$ then both ends lie in one of the sides of $C_1$, the outside say. Note that some vertex of $S$ must also lie outside $C_1$, for otherwise every double-ray with a tail in $\omega$ and a tail $\psi$ would have to go through  $C_1$ to meet $S$, contradicting the fact that $C_1$ does not separate $\omega$ from $\psi$. So pick the least index $j$ \st\ $s_j$ lies outside  $C_1$. Now consider two independent paths $P_1,P_2$ from $s_j$ to $C_1$, and let $A$ be the region of $\R^2\sm \{C_1 \cup P_1 \cup P_2\}$ containing rays in \oo. The boundary of $A$ is a cycle  $C_2$ containing $P_1 \cup P_2$ and a subpath of $C_1$. Note that no element of $\{s_1, s_2, \ldots, s_j\}$ lies in $A$ because those points do not lie outside $C_1$. Repeating this argument we construct the sequence of cycles $C_1, C_2, \ldots, C_m$, terminating with a cycle $C_m$ \st\ the  outside of $C_m$ contains \oo\ but none of the $s_i$. This cycle  separates $\omega$ from $\psi$ because every double-ray with a tail in $\omega$ and a tail $\psi$ has to cross it to meet $S$.
\end{proof}

Using this we can prove:

\begin{lemma} \label{LG3}
Let \g be a \iicon\ graph with an \vapf\ embedding \sig\ and more than 1 end. Then at least one of the faces of \sig\ has infinite boundary.
\end{lemma}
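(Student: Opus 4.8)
The plan is to argue by contradiction, assuming that every face of \sig\ has a finite boundary. Since \g is \iicon, each such boundary is then a cycle, so every face of \sig\ is enclosed by a finite cycle and is in particular a \emph{bounded} region of $\R^2$: indeed an unbounded face of an infinite \vapf\ plane graph cannot have a finite boundary, since a finite bounding cycle would confine all of $\sig(G)$ but its finitely many points to a bounded disc, contradicting that $\sig(G)$ is infinite and \vapf. Thus it will be enough to exhibit a single \emph{unbounded} face.

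Fix two distinct ends $\omega,\psi$ of \G. By \Lr{LG3m} there is a cycle $C$ with $\sig(C)$ separating $\omega$ from $\psi$; let $D$ be the bounded component of $\R^2\sm\sig(C)$. As $D$ is bounded and \sig\ is \vapf, only finitely many vertices lie in $D$, so no infinite connected subgraph of $G-V(C)$ is drawn there. Since $C$ separates the ends, a ray of $\omega$ and a ray of $\psi$ lie in distinct components $K_\omega\neq K_\psi$ of $G-V(C)$ --- were they in a common one, joining them inside it would produce a \dray\ avoiding $V(C)$ --- and both $K_\omega,K_\psi$ are drawn in $O:=\R^2\sm\overline D$. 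Choosing rays $R_\omega\subseteq K_\omega$, $R_\psi\subseteq K_\psi$ in $\omega$ and $\psi$, connecting their initial vertices to $C$ inside $K_\omega$ and $K_\psi$, and joining the resulting rays along a subpath of $C$, we obtain a \dray\ $L$ with one tail in $\omega$, one in $\psi$, and with $\sig(L)\cap D=\emptyset$. By \vapf ness $\sig(L)$ is a closed subset of $\R^2$ homeomorphic to $\R$ whose two ends run off to infinity, so $\R^2\sm\sig(L)$ has exactly two components $A_1,A_2$, both unbounded and each with frontier $\sig(L)$; as $\overline D$ is connected, disjoint from $\sig(L)$, and $\sig(C)\not\subseteq\sig(L)$, all of $\overline D$ lies on one side, say $A_1$, so $A_2\cap\overline D=\emptyset$.

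The crucial --- and hardest --- step is to choose $L$ so that $\sig(G)\cap A_2$ is \emph{bounded}. Granting this, the unique unbounded component $f$ of $A_2\sm\sig(G)$ has its frontier contained in $\sig(G)$ (because $\partial A_2=\sig(L)\subseteq\sig(G)$), hence is a face of \sig; and $f$ is unbounded, contradicting the first paragraph. To arrange that $\sig(G)\cap A_2$ is bounded I would pick $L$ extremally, minimising the number of vertices of \g that lie in $A_2$ over all double rays with one tail in $\omega$ and one in $\psi$ that meet $C$ in a vertex or subarc. If a vertex $v$ still lay in $A_2$, it would belong to some component $K$ of $G-V(C)$: if $K\in\{K_\omega,K_\psi\}$, a shortest path in $K$ from $v$ to $L$ runs inside $A_2$ and lets one re-route $L$ so as to push $v$ across $\sig(L)$, contradicting minimality; if $K$ is a different, necessarily infinite, component, it is drawn entirely inside $A_2$ and carries a third end, so one restarts the construction with that end in place of $\psi$. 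I expect exactly this last step --- making the extremal choice and the re-routing precise, and ruling out an unbounded regress of nested components trapped in $A_2$ --- to be the main obstacle; a cleaner route might instead take $L$ to be (part of) the boundary walk of a hypothetical unbounded face and deduce its existence from the cyclic ordering of the ends of a \vapf\ planar graph.
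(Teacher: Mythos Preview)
Your reduction to exhibiting an unbounded face is sound, and so is the construction of the \dray\ $L$. The trouble is exactly where you suspect: the extremal step is not well-founded. You propose to minimise the number of vertices of $G$ lying in $A_2$, but this number may be infinite for \emph{every} admissible $L$, so there is no minimum to pick. Your re-routing move (pushing a single $v$ across $L$ along a short path) does not convert an infinite count into a finite one --- in the ladder $\Z\times\{0,1\}$, starting from $L=\Z\times\{0\}$, each re-routing absorbs finitely many vertices of $\Z\times\{1\}$ and leaves infinitely many behind --- and the ``restart with a third end'' clause may recur without bound. (Also, a component $K\notin\{K_\omega,K_\psi\}$ of $G-V(C)$ need not be infinite: \Lr{LG3} does not assume vertex-transitivity.) The fallback you float, via the cyclic order of ends, would need its own argument; as written, the proof does not close.

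The paper avoids this global search by a short local argument that names the infinite face outright. With $C,K_\omega,K_\psi$ as you have them (both drawn outside $C$), planarity forces the vertices of $C$ adjacent to $K_\omega$ and those adjacent to $K_\psi$ to lie on disjoint subarcs $P_\omega,P_\psi$ of $C$. Take an endvertex $x$ of $P_\omega$ with a neighbour in $K_\omega$; in the cyclic order at $x$ there are consecutive edges $e=xy$ with $y\in K_\omega$ and $f=xz$ with $z\notin K_\omega$ and $f\notin E(P_\omega)$. Let $F$ be the face of $\sigma$ bounded by both $e$ and $f$. If $F$ were finite, the path $F':=F-C$ would start at $x$, run through $K_\omega$, and re-enter $C$ at some $x'\in P_\omega$; then $F'$ together with the $x$--$x'$ subarc of $P_\omega$ is a cycle $D$ with $K_\omega$ and $K_\psi$ on opposite sides, impossible for a \vapf\ embedding since both are infinite. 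Hence this very $F$ has infinite boundary. No minimisation is needed: the witness is produced directly from the combinatorics at the interface vertex $x$.
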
 
\begin{proof}
By \Lr{LG3m} there is a cycle $C$ separating two ends $\omega$, $\psi$ of \G. Since $\sig$ is \vapf, both these ends lie in the same side of $C$, the outside say. Let $K_\oo$ (respectively $K_\psi$) be the component of $G - C$ containing rays in \oo\ (resp.\ $\psi$). Easily, it is possible to find independent subpaths $P_\oo, P_\psi$ of $C$ \st\ every vertex of $C$ adjacent with $K_\oo$ lies in $P_\oo$, and similarly for $K_\psi$ and $P_\psi$. 
Let $x$ be an endvertex of $P_\oo$; \obda, $x$ is adjacent with $K_\oo$. 

By the choice of $x$ we can choose an edge $e=xy$ with $y\in V(K_\oo)$ and a further edge $f=xz$ incident with $x$ with $z$ not in $K_\oo$ and $f$ not in $P_\oo$, so that $e,f$ lie on a common face boundary $F$, bounding some face $\cf$, say. Note that $f$ may or may not lie on $C$. 
Now if $F$ is infinite we are done, so suppose it is finite. Consider the subpath $F'$ of $F$ starting with the edge $xy$ and finishing at the first visit of $F$ to $C$. Thus one of the endvertices of $F'$ is $x$, and  the other endvertex $x'$ must also lie on $P_\oo$ since, easily, $F'\sm \{x,x'\}$ is contained in the component $K_\oo$ of $G \sm C$. Now consider the cycle $D$ contained in $F' \cup P_\oo$. We claim that $K_\oo,K_\psi$ lie in distinct sides of $D$ which contradicts our assumption that \sig\ is \vapf. 

To see this, note that as $P_\oo \cap D$ joins two vertices $x,x'$ on the face boundary $F$, it defines two regions in $\R^2 \sm \cf$, one region $\ca$ bounded by $D$ and one region $\cb$ bounded by $(F\sm F' )\cup P_\oo$. By the definition of  $K_\oo$, there is a ray in $\oo$ starting at $y$ and avoiding $C$, and so this ray is contained in $\ca$. By inspecting the  cyclic ordering of the edges incident with $x$, it is easy to see that the edge of $C\sm P_\oo$ incident with $x$ (which edge may coincide with $f$) lies in $\cb$ by the choice of $\cf$. Thus, any ray in $\psi$ starting with that edge and avoiding $P_\oo$, which exists by the definition of $K_\oo$, $K_\psi$, lies in $\cb$. This proves our claim that $D$ separates rays in $\oo$ from rays in $\psi$.
\end{proof}

Our last lemma is

\begin{lemma} \label{tc1end}
There is no \tcon\ vertex-transitive \vapf\ planar graph with more than 1 end.
\end{lemma}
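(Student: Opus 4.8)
I would argue by contradiction, so suppose \g is a \tcon\ vertex-transitive \vapf\ plane graph, embedded by \sig, with more than one end (in the intended application of this lemma \g is a \Cg, so we may freely assume \g is \lf). Since a \tcong\ is \iicon, \Lr{LG3} provides a face $F$ of \sig\ whose boundary $\partial F$ is infinite; and in a \tcong\ a face boundary has no chord (a chord would be a separating set of size $2$), so $\partial F$ is an induced \dray\ $D$.

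The first real step is to spread this single infinite face over all of \G. By \Tr{imrcb} every automorphism of \g carries facial paths to facial paths, and $\partial F$ is a union of arbitrarily long facial paths; so, as \g is vertex-transitive, the image of $\partial F$ under any automorphism again lies on a face boundary, which must therefore be infinite. Hence every vertex of \g lies on the boundary of some infinite face:
\[
 V(G)=\bigcup\bigl\{\,V(\partial F')\ :\ F'\text{ an infinite face of }\sig\,\bigr\},
\]
each $\partial F'$ being an induced \dray. I would also note that \sig\ has a finite face --- otherwise \Lr{facecc} gives $\ccfg=0$, forcing \g to be a forest, contrary to $2$-connectedness --- and so, by \Lr{faces} and vertex-transitivity, every vertex also lies on a finite face.

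It remains to derive a contradiction with \tcon ness, and this is where I expect the bulk of the work to sit. The route I would follow: \sig\ has at least two infinite faces (if $F$ were the only one, the displayed identity would give $V(G)=V(D)$, hence $G=D$ since $D$ is induced, and a \dray\ is not \tcon). Choose two of them, $F_1,F_2$, with boundary \dray s $D_1,D_2$. Each $D_i$ is a properly embedded line, hence separates $\R^2$ into $F_i$ and a region $H_i$ with $V(G)\setminus V(D_i)\subseteq H_i$; so \g lies in $\overline{H_1}\cap\overline{H_2}$, a closed region with frontier in $D_1\cup D_2$. When $D_1,D_2$ are disjoint this region is homeomorphic to a strip $\R\times[0,1]$, whence \g has at most --- and so exactly --- two ends. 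One then invokes the $\Z$-periodic structure of a two-ended vertex-transitive graph: there is an automorphism $\tau$ of infinite order leaving \g invariant and ``shifting towards one end''; by \Tr{imrcb} $\tau$ permutes the infinite faces and, having infinite order, cannot interchange $F_1$ and $F_2$, so it translates each of $D_1,D_2$ along itself. Cutting the strip transversely to $\tau$ and using that $D_1,D_2$ are induced should then produce a separation of \g of order at most $2$ --- the ``cycle through a subpath of a boundary ray separates the two sides'' device already used in the proofs of \Lr{LG3} and \Lr{faces} --- which is the desired contradiction.

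The main obstacles are all in the last paragraph. First, one must make rigorous that two boundary \dray s confine \g to a strip with only two ends, and in particular deal with the case where $D_1$ and $D_2$ share vertices (re-choosing $F_2$, or arguing instead with the boundary circle of a closed-disc compactification of $\R^2$). Second, and more seriously, one must extract a size-$\le 2$ separator from the two-ended picture: two-ended vertex-transitive graphs need not be ``thin'' ($C_n\times\Z$ is \tcon), so the contradiction genuinely requires combining planarity with the induced double rays $D_1,D_2$, not merely the number of ends.
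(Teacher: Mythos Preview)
Your setup matches the paper: \Lr{LG3} gives an infinite face boundary, and \Tr{imrcb} plus vertex-transitivity then puts every vertex on some infinite face boundary. From there, however, you take a much harder road than necessary, and the gaps you honestly flag in your last paragraph are real: confining \g to a strip, reducing to exactly two ends, invoking a shift automorphism, and then extracting a size-$\le 2$ separator is a lot of machinery, and the last step is genuinely delicate (as your $C_n\times\Z$ remark shows).

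The paper's endgame is far shorter. Pick $x,y$ far apart on a single \dray\ $R$ contained in an infinite face boundary. By Menger's theorem (\tcon ness) there are three independent \pths{x}{y} $P_1,P_2,P_3$; topologically, two of them, say $P_1,P_2$, form a cycle $C$ with a tail of $R$ on one side and $P_3$ on the other. Having chosen $x,y$ far apart, $P_3$ has an internal vertex $z$. Now use the one fact you already established: $z$ lies on an infinite face boundary. Hence the $P_3$-side of $C$ also contains infinitely many vertices. Both sides of $C$ are infinite, contradicting \vapf ness.

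So the key idea you are missing is simply to manufacture a single finite cycle with infinitely many vertices on each side, using Menger for one side and the ``every vertex is on an infinite face'' fact for the other. No strip, no analysis of two-ended groups, no search for a small separator.
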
 
\begin{proof}
If such a graph \g exists, then by \Lr{LG3} it  has an infinite face-boundary. By \Tr{imrcb} this implies that every vertex of \g is incident with an infinite face-boundary.

Thus we can pick two vertices $x,y$ that lie in a common double ray $R$ of \G\ contained in a face-boundary. As \g is \tcon, there are three independent \pths{x}{y}\ $P_1,P_2,P_3$ by Menger's theorem \cite[Theorem~3.3.1]{diestelBook05noEE}. By an easy topological argument, there must be a pair of those paths, say $P_1,P_2$, whose union is a cycle $C$ \st\ some side of $C$ contains a tail of $R$ and the other side of $C$ contains $P_3$. We may assume \obda\ that $P_3$ is not a single edge, for we are allowed to choose $x$ and $y$ far apart. Thus the side of $C$ containing $P_3$ contains at least one vertex $z$. By our previous remarks, $z$ is incident with an infinite face-boundary. This means that both sides of $C$ contain infinitely many vertices, contradicting our assumption that \G\ is \vapf.
\end{proof}

We can now prove the main result of this section.
\begin{proof}[Proof of \Tr{thmtc}]
If any of $\Gam,\Delta$ is 1-ended then we are done since it is well-known, and not hard to prove, that all its planar \Cg s are \vapf\ in this case. The result now follows immediately from \Lr{tc1end}.
\end{proof}

\section{Facial presentations} \label{sML}

In this section we derive a further characterisation of the groups that admit a flat Cayley complex by means of group presentations. This characterisation is motivated by the concept of \prem s introduced before \Cr{corspin}.

Suppose we are given a group presentation $<S\mid R>$, with $S,R$ finite, and a fixed spin $\pi$ on $S$, that is, a cyclic order of $S \cup S^{-1}$ (note that $| S \cup S^{-1}| = 2|S| - |B|$, where $B\subseteq S$ is the set of $b\in S$ with $b=b^{-1}$, i.e.\ the set of involutions). Moreover, we fix an assignment $f: S \to \{0,1\}$, and say that $s \in S$ is spin-preserving if $f(s)=0$ and spin-reversing if $f(s)=1$. Let $T(S)$ be the \Cg\ corresponding to the presentation $<S\mid \{b^2 \mid b\in B\}>$, with parallel edges corresponding to the elements of $B$ replaced by single, undirected edges, and note that $T(S)$ is a tree.
Easily, $T(S)$ has a \prem\ $\tau$ in which the spin of each vertex is either $\pi$ or its reversal, and each $s\in S$ is spin-preserving \iff\ $f(s)=0$. Now call our presentation $<S\mid R>$ facial \wrt\ the data $\pi, f$, if for every rotation $w'$ of every word $w \in R$,  and any vertex $t$ of $T(S)$, the walk on $T(S)$ that starts at $t$ and is induced by $w'$ is facial in $\tau$.

It is a good exercise to try prove that each relator of a \fapr\ contains an even number of occurences of spin-reversing generators unless we are in the rather trivial case where $|S|=1$.

Note that every facial walk consisting of two edges of $T(S)$, or every two elements of $S \cup S^{-1}$ that are adjacent in $\pi$, uniquely determine a 2-way infinite, periodic, `facial' word. This easily implies that \ti\ a canonical way to rewrite any \fapr\ as  $<S \mid E_1^{r_1}, \ldots E_k^{r_k}> $, where $E_i$ is aperiodic and each 2-way infinite facial walk in $T(S)$ is obtained by repeatedly reading one of the $E_i$. Moreover, we have $k\leq |S\cup S^{-1}|$, but $k$ can be as small as 1 even if $S$ is large and all faces are finite; consider for example the presentation $< a,b,c \mid abcbac>$ which is facial \wrt\ the spin $a, c^{-1}, b, a^{-1}, c, b^{-1}$ and all edges spin-preserving.

We can now formulate the main result of this section, which complements \Cr{corspin}:

\begin{theorem}\label{thmfac}
The \Cg\ corresponding to any \fapr\ is planar and admits a consistent \vapf\ embedding.
\end{theorem} 

As an example application, consider a Coxeter presentation $$<s_1, \ldots , s_k \mid s_1^2, \ldots , s_k^2, (s_1 s_2)^{r_{12}}, (s_2 s_3)^{r_{23}}, \ldots , (s_k s_1)^{r_{k1}}>$$ with all exponents $r_{ij}$ at least $2$ and possibly infinite. It is straightforward to check that every such presentation is facial \wrt\ to the spin $s_1, \ldots , s_k$ and all generators spin-reversing. Thus \Tr{thmfac} tells us that the corresponding \Cg\ is planar and \vapf\ (this fact is probably well-known to experts in geometry).

For the proof of \Tr{thmfac} we will use 
the following theorem of Thomassen, which generalises MacLane's classical planarity criterion \cite[Theorem~4.5.1]{diestelBook05noEE} to infinite \vapf\ planar graphs. 
A \defi{2-basis} of \g is a generating set $B$ of the cycle space \ccfg\ \st\ no edge of \g appears in more than two elements of $B$. 

\begin{theorem}[{\cite[Section 7]{thoPla}}] \label{thom}
A 2-connected graph has a 2-basis if and only if it is planar and has an \vapf\ embedding.
\end{theorem} 

The requirement that \g be 2-connected is essential in this assertion: consider for example the \Cg\ $G$ corresponding to the presentation  $<a_1,a_2, z \mid  a_1 a_2= a_2 a_1>$. Thus \g is the free product of the square grid with the integer line. Note that the squares of the former factor form a 2-basis of \G, still $G$ does not have an \vapf\ embedding. 

In order to be able to still apply \Tr{thom} in our setup, we will use the following fact. We say that a group presentation $A$ \defi{contains} a group presentation $B$, if the \Cg\ corresponding to $B$ is a subgraph of the \Cg\ corresponding to $A$. Note that this means that the generating set of $A$ contains that of $B$, but the sets of relators can be quite different.

\begin{lemma} \label{2conlem}
Every \fapr\ is contained in a \fapr\ the \Cg\ of which is  2-connected.
\end{lemma}
\begin{proof}
Let $<S \mid R>$ be a \fapr\ \wrt\ a spin $\pi$ and an assignment $f$ as in the above definition. If its \Cg\ \g is 2-connected we are done, so suppose it is not. Then any vertex $x$ separates \G, and so we can find $s,t\in S\cup S^{-1}$ that are consecutive in the spin $\pi$ but $xs$ and $xt$ lie in distinct component of $\g - x$. We now construct a new presentation $<S' \mid R'>$ containing $<S \mid R>$ as follows. Firstly, we add a new generator $z$ to $S$ to obtain $S'$. Secondly, add the relator $z=t^{-1}s$ to $R$. Finally, for every $w \in R$ containing $t^{-1}s$ (respectively $s^{-1}t$) as a subword ---assume here that $w$ is spelt without using exponents other than $-1$--- replace that subword by the letter $z$ (resp.\ $z^{-1}$). Let $R'$ be the set of relators obtained after all these changes.

It is easy to see that $<S' \mid R'>$ contains $<S \mid R>$, as it amounts to adding a redundant generator $z$. It is also straightforward to see that the data $\pi, f$ can be extended so as to make $<S' \mid R'>$ a \fapr. Indeed, we can let $f(z)= f(s) XOR f(t)$. To extend $\pi$ to $S'$, let us assume \obda\ that $s$ immediately precedes $t$ in $\pi$. If $s$ is spin-preserving, i.e.\ if $f(s)=0$, then we insert $z^{-1}$ into $\pi$ at the position just before $s$. Otherwise,  we insert $z^{-1}$ into $\pi$ at the position just after $s$. Similarly, we insert $z$ at the position just after (respectively, before) $t$
if $t$ is spin-preserving (resp.\ spin-reversing). It is now straightforward to check that every relator in $R'$ is facial \wrt\ this data.
\end{proof}

We can now prove \Tr{thmfac}.
\begin{proof}
Let $<S \mid R>$ be a \fapr. By our last lemma, we can find a \fapr\ $<S' \mid R'>$ containing $<S \mid R>$ the \Cg\ $G'$ of which is  2-connected. Let us show that $G'$ admits an \vapf\ embedding.

Recall that $G'$ admits a presentation of the form
$$<S \mid E_1^{r_1}, \ldots E_k^{r_k}>,$$
where each $E_i$ is an aperiodic facial word. We may assume \obda\ that $r_i$ is minimal with the property that $E_i^{r_1}$ induces a closed walk in $G'$, for otherwise we can replace $r_i$ with some smaller value in the $i$th relator and obtain an equivalent presentation.

It would make our proof simpler if \fe\ $s\in S \cup S^{-1}$ and every  $E_i$, the letter $s$ appears at most once in $E_i$. This however is not always the case: the presentation $<c,b \mid cbcb^{-1}>$ for example, the \Cg\ of which is a square grid, is facial \wrt\ to the spin $c,b,c^{-1},b^{-1}$, with $c$ being spin-preserving and $b$ spin-reversing; but $c$ appears twice in the word $cbcb^{-1}$ (in this section we consider $s$ and $s^{-1}$ to be distinct letters). Still, we can easily modify our presentation whenever this situation occurs, to ensure that each letter $s$ appears at most once in each $E_i$. To begin with, note that $s$ can appear at most twice in $E_i$: for as $E_i$ is uniquely determined (up to rotation and reversing) by any letter and `side', if $E_i$ contains three occurrences of $s$, then two of them will correspond to the same side, implying that $E_i$ is periodic contrary to our assumption. By the same arguments, if $s$ appears twice in $E_i$, then this means that some rotation of the word $E_i$ is read along both `sides' of $s$ in $T(S)$. To avoid this situation, we can extend $S$ by a new generator $s'$, and add the relator $s's^{-1}$ to our presentation. The new presentation yields the same \Cg\ with a parallel edge added to each $s$ edge, and it is straightforward to amend the spin data to make sure that the presentation is still facial. Thus from now on we will assume that 
\labtequ{once}{\fe\ $s\in S \cup S^{-1}$ and every  $E_i$, the letter $s$ appears at most once in $E_i$.}
It is still possible though that $E_i$ contains both $s$ and $s^{-1}$.

Let $W$ be the set of walks in $G'$ induced by the above relators $E_i^{r_i}$.
Note that \eqref{once} and our choice of the $r_i$ imply that 
\labtequ{notwo}{no walk in $W$ traverses any edge of $G'$ twice in the same direction.}
For if this was the case, then the subwalk between to subsequent visits to the first endpoint of that edge would be closed, and by \eqref{once} it would be induced by (a rotation of) $E_i^{m}$ with $m< r_i$.

Recall that  $W$ generates $\ccfh(G')$ (\Lr{relcc}). The idea is to try apply \Tr{thom} to $W$, adapting the fact that every edge of a planar graph appears in just two facial walks to our situation. 

Before we do that, we first simplify $W$ as follows. \Fe\ walk $w\in W$ traversing some edge $e$ of $G'$ in both directions, we split $w$ into two closed walks $w_1, w_2$ that traverse $e$ less often in total in such a way that $w_1+w_2$ corresponds to the same element of $\ccfh(G')$ as $w$. We repeat this recursively as often as needed until no walk traverses an edge in both directions. Finally, if two of the resulting closed walks can be obtained from one another by rotation or inversion, we delete one of them, and repeat until no such pairs exist. Let $W'$ denote the resulting set of walks.

By construction, $W'$ still  generates $\ccfh(G')$. 
We claim that $W'$ has the desired property that every edge of $G'$ appears in at most two elements of $W'$, and at most once in each of them. 


Indeed, since each element of $W$ traverses each edge at most once in each direction by \eqref{notwo}, each element of $W$ traverses each edge at most once in total.
Next, suppose that three distinct walks in $W'$ traverse some edge $e$. Again by \eqref{notwo}, no two of them come from the same element of $W$. Then, as our presentation is planar, two of them, call them $w_1,w_2$, are induced by the same relator $E_i^{r_i}$. Thus each of $w_1,w_2$ contains the same number of edges of the colour of $e$ and, by \eqref{once}, the same subword of $E_i^{r_i}$ (which must be a rotation of the word $E_i$) is read between any two subsequent traversals of such an edge. This easily implies that $w_1$ is a rotation of $w_2$, contradicting the construction of $W'$.

This proves that each edge appears at most twice in $W'$. Since $W'$ generates $\ccfh(G')$, the set of edge-sets of its elements generates $\ccf(G')$\footnote{It is easy to see that the canonical projection of a generating set of $\ccfh(G)$ to $\ccf(G')$ generates \ccfg; the converse is not always true \cite[Figure 9]{kavitha_cycle_2009}.}.  Splitting each such edge-set into edge-disjoint cycles ---it is well-known that this is possible \cite[Proposition 1.9.2.]{diestelBook05noEE}--- we obtain a 2-basis of $\ccf(G')$. By \Tr{thom}, $G'$ admits an \vapf\ embedding, and so does its subgraph \G. 

By \Cr{corspin}, \G\ even admits a consistent \vapf\ embedding.

\end{proof}

Using the results of \Sr{smain} we can prove now that the converse of \Tr{thmfac} is also true, yielding \Cr{corfapr}; we repeat its statement here.
\begin{nocorollary}
A \Cg\ admits an \vapf\ embedding \iff\ it admits a \fapr.
\end{nocorollary} 
\begin{proof}
Let \g be a \Cg\ with an \vapf\ embedding. Then \g admits a consistent  \vapf\ embedding by \Cr{corspin} \sig. As in the first part of the proof of \Tr{thmvapf}, the set  $R$ of relations corresponding to the finite facial closed walks of \sig\ incident with the group identity yields a presentation of \G, and this presentation is, by construction, facial \wrt\ to the spin data of \sig.
\end{proof}

Note that this implies that every group admitting an \vapf\ planar \Cg\ is finitely presented. This fact extends to all planar groups \cite{droInf}. In the \vapf\ case $|S\cup S^{-1}|$ is an upper bound on the number of relators needed to present a group with generating set $S$, but in the general case this is not necessarily the case; see \cite[Problem 10.2.]{cayley3}.

\medskip
{\bf Remark:} One could modify the definition of a \fapr\ by not giving involutions in $S$ any special treatment, that is, by letting $T(S)$ be the \Cg\ of the presentation $<S\mid \emptyset>$ (a tree of degree $2|S|$). \Tr{thmfac} would then still be true by the same proof, but I suspect that its converse in \Cr{corfapr} would fail; see \fig{ladder}.

\medskip

\comment{
	\begin{corollary} \label{thomcor}
	A connected graph has a 2-basis if and only if it is planar and each of its 		blocks has an \vapf\ embedding.
	\end{corollary} 
}

It is known that the groups admitting an \vapf\ planar \Cg\ can be effectively enumerated \cite{droInf}. Using \Cr{corfapr} we can strengthen this as follows.
\begin{corollary} \label{efen}
The \vapf\ planar \Cg s can be effectively enumerated.
\end{corollary}
\begin{proof}
It is easy to construct an algorithm that given an abstract group presentation $<S \mid R>$, with both $S,R$ finite, decides whether this presentation is facial \wrt\ some spin data, since there are only finitely many possibilities for such data. The assertion thus follows from \Cr{corfapr}.
\end{proof}

\comment{
\note{======================}
Define a \defi{\vapf\ presentation} to be a group presentation $\left< \cs \mid \mathcal R \right>$ with the following two properties. First, every closed walk induced by a relator $R\in \mathcal R$ is a cycle; in other words, no proper subword of $R$ is a relation. Second, for every edge $e$ in the corresponding \Cg\ $G$, at most two cycles of \g induced by the relators in $\mathcal R$ contain $e$. Note that the latter property can be checked by an easy algorithm once the former is guaranteed.

By \Lr{relcc} the cycles induced by the relators of an \vapf\ presentation form a 2-basis. Combined with \Tr{thom} this yields the following valuable tool, which in many cases \cite{cay2con,cayley3} allows one to deduce that certain \Cg s are planar by looking only at the corresponding presentations. 

\note{FALSE: FIX THIS}
\begin{corollary} \label{corpres}
A group admits a flat Cayley complex, and an \vapf\ \Cg, \iff\ it admits an \vapf\ presentation. 
\end{corollary}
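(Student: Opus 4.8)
The plan is to string together the equivalences already established, so this is mostly a bookkeeping exercise. The statement to prove is: a group $\Gam$ admits a flat Cayley complex \emph{and} a \vapf\ \Cg\ if and only if it admits a \vapf\ presentation. I would begin with the forward direction. Suppose $\Gam$ admits a \vapf\ presentation $\left<\cs\mid\mathcal R\right>$, and let $G = Cay(\Gam,\cs)$ be the corresponding \Cg. By the first defining property of a \vapf\ presentation, every closed walk induced by a relator in $\mathcal R$ is a cycle, so the set $B$ of edge-sets of these cycles is well-defined; by \Lr{relcc}, $B$ generates \ccfg. By the second defining property, no edge of $G$ lies in more than two of these cycles, so $B$ is a 2-basis of $G$. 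Hence by \Tr{thom}, $G$ is planar and \vapf. This settles the ``\Cg'' half. For the ``flat Cayley complex'' half: by the second characterisation of the \sfed\ Cayley complex given in the introduction, the \sfed\ Cayley complex $X$ of $\left<\cs\mid\mathcal R\right>$ is obtained from $G$ (with parallel edges identified) by attaching a 2-simplex along each cycle induced by a relator. Since $G$ is \vapf\ planar, fix a \vapf\ embedding; each 2-simplex can be embedded into the face bounded by its defining cycle (these faces are finite because the boundary cycles are finite and the embedding is \vapf), exactly as in the forward implication of the proof of \Tr{thmvapf}. Distinct simplices have distinct boundaries, hence lie in distinct faces, so no overlaps occur; thus $X$ is planar, i.e.\ the Cayley complex is flat.

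For the converse, suppose $\Gam$ admits a flat Cayley complex and a \vapf\ \Cg. Actually it suffices to use just one of these hypotheses, and the cleanest route is through the flat Cayley complex together with \Tr{thmvapf}. Let $X$ be a planar \sfed\ Cayley complex of $\Gam$, say of the presentation $\left<\cs\mid\mathcal R_0\right>$, with 1-skeleton $G$. By the backward implication of \Tr{thmvapf} (or directly by the argument in its proof), $G$ is \vapf\ and planar; fix a \vapf\ embedding \sig. Now let $\mathcal R'$ be the set of relations corresponding to the finite face boundaries of \sig\ incident with the identity $e$. By \Lr{faces} every finite face boundary of \sig\ is a cycle induced by some element of $\mathcal R'$, and conversely; by Lemmas~\ref{relcc} and~\ref{facecc}, $\left<\cs\mid\mathcal R'\right>$ is a presentation of $\Gam$. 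I claim it is a \vapf\ presentation: the closed walks induced by relators in $\mathcal R'$ are by construction finite face boundaries, hence cycles (this is the first property); and since \sig\ is a plane embedding, every edge lies on at most two face boundaries, so at most two cycles induced by $\mathcal R'$ contain any given edge (the second property). Hence $\Gam$ admits a \vapf\ presentation.

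I do not expect a serious obstacle here, as every ingredient is already in place; the only points requiring a little care are purely expository. First, one must be consistent about the parallel-edges convention: Corollary~\ref{corpres} concerns honest \Cg s and presentations, whereas \Tr{thmvapf} and the cycle-space lemmas were proved under the convention that involutions give single undirected edges. As observed in the excerpt, this convention affects neither planarity nor \vapf ness nor the cycle space, so one simply remarks that one may pass freely between the two pictures. Second, in the converse direction one should note that a \vapf\ presentation has been \emph{produced} from the flat Cayley complex alone, so the hypothesis that $\Gam$ also has a \vapf\ \Cg\ is in fact redundant for this direction — but it is harmless to keep it, since by \Tr{thmvapf} the two hypotheses (``flat Cayley complex'' and ``\vapf\ \Cg'') are equivalent for $\Gam$ anyway. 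The write-up should be only a few lines once these conventions are flagged.
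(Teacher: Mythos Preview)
Your converse direction is correct (if somewhat roundabout), but the forward direction has a genuine gap in the ``flat Cayley complex'' half. From a \vapf\ presentation you correctly obtain a 2-basis $B$ and hence, by \Tr{thom}, a \vapf\ embedding \sig\ of $G$. You then write ``each 2-simplex can be embedded into the face bounded by its defining cycle''. But \Tr{thom} as stated gives you \emph{some} \vapf\ embedding; it does not say that the elements of your 2-basis $B$ are face boundaries in that embedding. A relator cycle could perfectly well have vertices of $G$ on both sides in \sig, in which case there is no face to put the 2-simplex into. Your appeal to ``exactly as in the forward implication of the proof of \Tr{thmvapf}'' is misleading: in that proof the relators were \emph{defined} to be the face boundaries of a given embedding, so the issue did not arise.

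There are two clean fixes. The first is to invoke the stronger form of Thomassen's theorem (for \iicon\ $G$, any 2-basis can be realised as the set of finite face boundaries of some \vapf\ embedding); but in the paper this appears only after \Cr{corpres}, as the tool behind \Cr{cormacay}, so using it here is circular in the exposition. The second, which is what the paper does, is simply not to attempt a direct embedding of the \sfed\ Cayley complex of your given presentation: once you have shown that $G$ is a \vapf\ \Cg, apply \Tr{thmvapf} to conclude that $\Gam$ admits \emph{some} flat Cayley complex (possibly for a different presentation). The paper then closes the loop as a cycle of implications: flat Cayley complex $\Rightarrow$ \vapf\ presentation (read off directly from the planar complex: the boundaries of 2-simplices are cycles, and planarity forces each edge into at most two of them) $\Rightarrow$ \vapf\ \Cg\ (your 2-basis argument) $\Rightarrow$ flat Cayley complex (\Tr{thmvapf}). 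This also makes your converse direction a one-liner rather than a rerun of the proof of \Tr{thmvapf}.
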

\begin{proof}
Given a planar \sfed\ Cayley complex of the group \Gam\ it is straightforward to derive  an \vapf\ presentation of \Gam. By the above discussion, such a presentation yields an \vapf\ \Cg\ of \Gam. This in turn implies that \Gam\ admits a planar \sfed\ Cayley complex by \Tr{thmvapf}.
\end{proof}
The fact that a group with a planar \sfed\ Cayley complex admits an \vapf\ presentation is implicit in \cite[Theorem 4.5.6]{ZVC}. 

In fact, we can say a bit more. Thomassen \cite[Theorem 7.4.]{thoPla} also proved that if \g is \iicon, then \Tr{thom} can be strengthened to yield that given any 2-basis $B$ of $G$, \ti\ an \vapf\ embedding \sig\ of $G$ such that $B$ is the set of finite face-boundaries of \sig. If \g is a \Cg\ then the requirement of being \iicon\ can be dropped by using the following lemma

\comment{
\note{Is this usefull here?}
\begin{lemma}\label{cg2con}
Every planar \Cg\ of connectivity 1 can be extended into a planar 2-connected \Cg\ by adding redundant generators.	
\end{lemma}
\begin{proof}
We proceed by induction on the number of blocks incident with the identity 0. Pick two such blocks $B,C$, an edge from $B$ corresponding to some generator $b$, and  an edge from $C$ corresponding to some generator $c$. Introduce a new redundant generator $x$ and the relation $x= b^{-1}c$.  Clearly, the resulting \Cg\ $G'$ obtained from the original \Cg\ \g by adding the generator $x$ has less blocks incident with 0 than \G.  

We claim that $G'$ is still planar. Indeed, it suffices to show that \g can be embedded in such a way that \fe\ vertex $v$, the edges labelled $b$ and $c$ emanating from $v$ lie in a common face boundary. This can be easily proved using the block cut-vertex tree $T$ of $G$: starting from the single vertex $0$, we attach the blocks of $T$ one by one in a breadth first fassion, embedding them in the plane. Note that each time we attach a new block $D$ at a vertex $v$,  we can freely choose a face incident with $v$ in which to embed $D$. Likewise, we can choose any face of $D$ incident with the attaching vertex to play the role of the outer face. 

\end{proof}
}



\note{False: free factors can spoin \vapf ness: $<a,b,c,d,e \mid (ac)^2, (bc)^2, (ad)^2, (bd)^2>$ (grid $*$ edge)}
\begin{corollary} \label{cormacay}
Given an \vapf\ presentation, the corresponding \Cg\ has an \vapf\ embedding the finite face-boundaries of which are precisely the cycles of \g induced by the relators.
\end{corollary}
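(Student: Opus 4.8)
The plan is to read a $2$-basis of $G$ off the presentation and then invoke the strengthened form of Thomassen's theorem recalled just above the statement, first blockwise and then on $G$ as a whole. So let $\left<\cs\mid\mathcal R\right>$ be a \vapf\ presentation with corresponding \Cg\ $G$, and let $B$ be the set of edge-sets of the cycles of $G$ induced by the relators of $\mathcal R$. By the first defining property of a \vapf\ presentation these are genuine cycles; by \Lr{relcc} they generate \ccfg; and by the second defining property no edge of $G$ lies on more than two of them. Hence $B$ is a $2$-basis of $G$, and it suffices to exhibit a \vapf\ embedding of $G$ whose set of finite face-boundaries is exactly $B$.

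I would first handle each block of $G$ separately. Let $H_1,H_2,\dots$ be the blocks of $G$, i.e.\ its maximal \iicon\ subgraphs and its bridges. As noted before the statement, each $H_j$ is either a single edge --- coming from a free generator, hence contributing nothing to the cycle space --- or again a \Cg. Since every cycle of $G$ lies inside a single block, \ccfg\ is the direct sum of the subspaces $\cc_f(H_j)$, and each member of $B$ lies in exactly one of them; write $B_j\subseteq B$ for the part lying in $H_j$. One checks that $B_j$ generates $\cc_f(H_j)$ and that, since an edge of $H_j$ lies on no cycle outside $H_j$, no edge of $H_j$ lies on more than two members of $B_j$; so $B_j$ is a $2$-basis of $H_j$. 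Hence for each non-bridge block, \cite[Theorem~7.4]{thoPla} provides a \vapf\ embedding $\sigma_j$ of $H_j$ whose set of finite face-boundaries is precisely $B_j$; bridges are embedded trivially.

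Finally I would amalgamate the $\sigma_j$ along the block--cut tree of $G$. Starting from a root block and proceeding outwards (say in breadth-first order along the tree), attach each new block $H_j$ at its attaching cut-vertex $v$ by inserting a homeomorphically shrunk copy of $\sigma_j$ into a face of the embedding built so far that is incident with $v$ and is \emph{not} one of the finite faces already prescribed by the $\sigma_i$ placed before it. This leaves every $\sigma_j$ combinatorially intact inside the resulting embedding $\sigma$, so the finite face-boundaries of $\sigma$ are exactly $\bigcup_j B_j=B$; and since $G$ is locally finite, and hence its block--cut tree is a locally finite tree, the insertions can be arranged so that the images of the vertices of $G$ have no accumulation point. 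Thus $\sigma$ is a \vapf\ embedding of $G$ realising precisely the relator cycles as its finite faces, as required.

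The step I expect to be the main obstacle is this last amalgamation. One must simultaneously (i) route each new block through a face at its cut-vertex \emph{without} ever burying a face that is to carry a $2$-cell --- so one has to single out an ``outer''-type face at every cut-vertex and verify that one is available --- and (ii) carry out these infinitely many, possibly deeply nested, insertions while keeping the vertex set discrete in $\R^2$. This is exactly the content of the informal remark before the statement that ``the requirement of being \iicon\ can be dropped by applying the result to the maximal \iicon\ subgraphs''. By contrast, extracting the $2$-basis and splitting the cycle space over the blocks are routine.
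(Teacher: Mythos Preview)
Your proposal is correct and follows essentially the same approach as the paper: the paper's argument, given in the paragraph immediately preceding the corollary, is precisely to apply Thomassen's strengthened \cite[Theorem~7.4]{thoPla} to each maximal \iicon\ subgraph and then amalgamate, which is exactly what you do. You have simply spelled out in more detail the block decomposition and the gluing along the block--cut tree that the paper condenses into a single sentence.
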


As an example, consider a coxeter group presentation of the form $\left<b,c,d\mid b^2, c^2,d^2, (bc)^n, (bd)^m\right>$

Indeed, the set $B$ of cycles in \g induced by the relators in the above presentation form a 2-basis, in which every $b$ edge appears twice and every other edge appears once.

}

\acknowledgements{I am grateful to Martin Dunwoody for pointing out a shortcoming of an earlier version of the paper.}

\bibliographystyle{plain}
\bibliography{collective}
\end{document}